\documentclass[11pt]{amsart}
\usepackage{amsmath,amssymb,amsfonts,enumerate,graphicx,bm,fullpage}
\usepackage{amsthm}
\usepackage{mathptmx}

\usepackage[all]{xy}
\usepackage{xcolor}
\usepackage{framed}
\input xy
\xyoption{all}

\newtheorem{Theorem}{Theorem}[section] \newtheorem{Corollary}[Theorem]{Corollary} \newtheorem{Lemma}[Theorem]{Lemma} \newtheorem{Proposition}[Theorem]{Proposition} \theoremstyle{definition}
 \newtheorem{Definition}[Theorem]{Definition} \newtheorem{Example}[Theorem]{Example}  \newtheorem{Remark}[Theorem]{Remark}  \newtheorem{Question}[Theorem]{Question}    \numberwithin{equation}{section}  \newtheorem{Algorithm}[Theorem]{Algorithm}



\DeclareMathOperator{\Char}{char}
   \DeclareMathOperator{\Ann}{Ann}

 \DeclareMathOperator{\Ht}{ht}





\newcommand{\llar}{-\kern-5pt-\kern-5pt\longrightarrow}
\def\restr{{\kern-1pt\restriction\kern-1pt}}

\title{Lower Bounds for Betti Numbers of Monomial Ideals}

\author[A.Boocher]{Adam Boocher$^1$}
\address{$^{1}$  University of Utah, Salt Lake City, Utah, USA}
\email{boocher@math.utah.edu and  aboocher@gmail.com}

\author[J. Seiner]{James Seiner$^2$}
\address{$^{2}$  University of Michigan, Ann Arbor}
\email{seiner@umich.edu}

\date{\today}

\begin{document}
\maketitle

\begin{abstract}Let $I$ be a monomial ideal of height $c$ in a polynomial ring $S$ over a field $k$.  If $I$ is not generated by a regular sequence, then we show that the sum of the betti numbers of $S/I$ is at least $2^c + 2^{c-1}$ and characterize when equality holds.  Lower bounds for the individual betti numbers are given as well. \end{abstract}

\section{Introduction}
If $I$ is a homogeneous ideal in a polynomial ring $S$ over a field $k$, the betti number $\beta_i(S/I)$ denotes the rank of the $i$-th free module appearing in a minimal $S$-free resolution of $S/I$.  The main result of this paper is the following: 
\begin{Theorem}\label{MainTheorem} Let $I$ be a monomial ideal of height $c$ in a polynomial ring $S$.  
If $I$ is not a complete intersection then $\sum \beta_i(S/I) \geq 2^c + 2^{c-1}.$  
Furthermore, equality holds if and only if the betti numbers are $\{1,3,2\}$, $\{1,5,5,1\}$, or a extension thereof by tensoring with a Koszul complex. 
By this we mean that when equality holds the generating function for $\beta_i(S/I)$ is either
$$(1 + 3t + 2t^2)(1+t)^{c-2}, \mbox{ or } (1 + 5t + 5t^2 + t^3)(1+t)^{c-3}.$$
\end{Theorem}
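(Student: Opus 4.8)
The plan is to reduce to squarefree ideals, separate off the easy estimate $\sum_i\beta_i(S/I)\ge 2^c$, and then force the extra $2^{c-1}$ by an induction in which one variable is split off at a time. Polarization preserves all graded Betti numbers, the height $c$, and the property of being a complete intersection, so I would first assume $I$ is squarefree; discarding any variable dividing no minimal generator changes none of these quantities, so I may also assume every variable divides a minimal generator, whence $c=n-d$ with $n$ the number of variables and $d=\dim S/I$ the largest size of a face of the associated simplicial complex. The inequality $\sum_i\beta_i(S/I)\ge 2^c$ is precisely the content of the pointwise estimates $\beta_i(S/I)\ge\binom{c}{i}$: by Hochster's formula it is enough to exhibit, for each of the $2^c$ subsets $W$ of a suitable set of $c$ variables transverse to a maximal face, a nonzero reduced homology class of the induced subcomplex on $W$. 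Thus the theorem comes down to sharpening $2^c$ to $2^c+2^{c-1}$ for non--complete-intersection $I$, and to the rigidity behind the equality clause.

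In low codimension I would argue by hand. For $c=1$: a non-principal monomial ideal has $\mu(I)\ge 2$, hence $\beta_2(S/I)\ge 1$ and $\sum_i\beta_i(S/I)\ge 4>3$, so equality never occurs and the statement is vacuously consistent. For $c=2$: if $I$ is not a complete intersection then $\mu(I)\ge 3$, since in the Cohen--Macaulay ring $S$ a height-$c$ ideal minimally generated by $c$ elements is generated by a regular sequence; thus $\beta_1(S/I)\ge 3$. If $S/I$ is Cohen--Macaulay, Hilbert--Burch gives $\beta_2=\beta_1-1$ and $\sum_i\beta_i=2\beta_1\ge 6$ with equality exactly for $\{1,3,2\}$; otherwise the resolution has length $\ge 3$, and $\beta_0=1$, $\beta_1\ge 3$, $\beta_2\ge 1$ together with $\sum_i(-1)^i\beta_i=0$ (valid since $\dim S/I<n$) leave no Betti sequence of total $\le 6$ apart from $\{1,3,2\}$. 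The case $c=3$ goes the same way but needs extra care, as there are two extremal tables to separate, $\{1,4,5,2\}=\{1,3,2\}\otimes(1+t)$ and $\{1,5,5,1\}$; excluding total $12$ in any other shape uses projective-dimension bounds, the alternating-sum relation, and the structure of codimension-three (in particular, Gorenstein) resolutions.

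For the inductive step (induction on $n$) I would fix a variable $x$ dividing some minimal generator and use
$$0\longrightarrow \bigl(S/(I:x)\bigr)(-1)\xrightarrow{\ \cdot x\ }S/I\longrightarrow S/\bigl(I+(x)\bigr)\longrightarrow 0 .$$
Since $I$ is squarefree, $I:x$ involves only the other $n-1$ variables and has height $\ge c$ there, while $S/(I+(x))$, regarded over that smaller polynomial ring, is a squarefree monomial quotient of height $\ge c-1$ whose total Betti number is half that of the $S$-module $S/(I+(x))$; so the inductive hypothesis applies to both end terms. The long exact sequence in $\Tor(-,k)$ gives the identity $\sum_i\beta_i(S/I)=\sum_i\beta_i(S/(I:x))+\sum_i\beta_i(S/(I+(x)))-2\delta$, where $\delta\ge 0$ is the total rank of the connecting maps $\Tor_i(S/(I+(x)))_j\to\Tor_{i-1}(S/(I:x))_{j-1}$. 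The two end terms contribute at least $2^{\,c+1}$ (strictly more if either is not a complete intersection), so the inequality follows as soon as $x$ can be chosen to make $\delta$ small --- ideally so that all connecting maps vanish, i.e.\ the sequence is a Betti splitting, and one verifies that such a variable always exists. For the equality clause one reads the accounting backwards: $\sum_i\beta_i(S/I)=3\cdot 2^{c-1}$ pins down both end terms (each must meet its own lower bound, so by induction each has a Betti table that is a Koszul extension of $\{1,3,2\}$, of $\{1,5,5,1\}$, or of a complete intersection) and pins down $\delta$, and then a compatibility analysis of how the two graded resolutions glue across the exact sequence forces $\beta(S/I)$ into one of the two displayed forms. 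Finally, both forms genuinely occur: the edge ideals $I(C_3)=(x_1x_2,x_2x_3,x_3x_1)$ and $I(C_5)$ of the $3$- and $5$-cycles have resolutions with Betti numbers $\{1,3,2\}$ and $\{1,5,5,1\}$, and tensoring with Koszul complexes on fresh variables realizes every claimed generating function.

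The hard part is the control of $\delta$ in the inductive step: an arbitrary split-off variable yields only the Euler-characteristic--type identity above, which on its own is too weak, so one must prove that for a non--complete-intersection squarefree monomial ideal there is always a variable making the sequence a Betti splitting (or at least making $\delta$ negligibly small), and then show that the equality case is rigid enough to propagate the two extremal shapes up the induction. The low-codimension base cases, although elementary, must be carried out precisely, since they provide the atoms $\{1,3,2\}$ and $\{1,5,5,1\}$ on which the rigidity argument is built.
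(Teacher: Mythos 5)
Your approach is genuinely different from the paper's, and the difference is where the gap lies. The paper does not attempt to control an inductive step via a single short exact sequence; instead it iterates Lemma \ref{Lemma localization} (localizing at monomial primes can only decrease Betti numbers) together with stripping off cone variables (Algorithm \ref{Algorithm}), terminating at a \emph{nearly complete intersection}, i.e.\ an ideal whose every localization $I(x{=}1)$ is a CI. NCI ideals have a rigid structure (Lemma \ref{Lemma NCI}: any two minimal generators share a gcd of degree at most one), and the paper then bounds their Betti numbers by a case analysis using two different decompositions --- the Francisco--H\`a--Van Tuyl betti splitting (Proposition \ref{betti splitting}) and the pruning/presentation-of-$(I{:}x)/I$ technique (Propositions \ref{adam pruning} and \ref{presentation matrix}). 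Your proposed induction on $n$ via
$$0\longrightarrow \bigl(S/(I:x)\bigr)(-1)\xrightarrow{\ \cdot x\ }S/I\longrightarrow S/\bigl(I+(x)\bigr)\longrightarrow 0$$
is not what the paper does, and, as you yourself flag, everything hangs on controlling the connecting-map defect $\delta$.

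That control is missing, and the two fallbacks you offer are both false. First, there is in general no variable $x$ for which the sequence is a Betti splitting: already for $I=(xy,yz,zx)$ (which \emph{must} appear, since it realizes $\{1,3,2\}$), symmetry reduces to $x$, and there $\beta(S/(I{:}x))+\beta(S/(I{+}(x)))=4+4=8$ while $\beta(S/I)=6$, so $\delta=1\neq 0$. Second, the weaker bound $\delta\le 2^{c-2}$ that would salvage $\beta(S/I)\ge 2^{c+1}-2\cdot 2^{c-2}=2^c+2^{c-1}$ also fails: for the pentagon $I=(xy,yz,zv,vw,wx)$ with $c=3$ one computes $\beta(S/(I{:}x))=8$, $\beta(S/(I{+}(x)))=12$, $\beta(S/I)=12$, hence $\delta=4>2=2^{c-2}$. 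The conclusion $\beta(S/I)\ge 2^c+2^{c-1}$ is rescued only because the end terms happened to total $20$ rather than the generic floor $16$, so the bound you actually need on $\delta$ depends on the sizes of the end terms; no such compensating estimate is proposed, and I don't see one that works uniformly. The equality characterization (``a compatibility analysis of how the two graded resolutions glue'') is likewise left entirely open. The base cases $c\le 3$ are plausible sketches but also carry weight you have not discharged, especially at $c=3$ where the two extremal shapes have to be isolated. As it stands the proposal is an outline with a hole at exactly the point the paper's NCI reduction was invented to avoid.
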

\medskip
Suppose $I$ is an arbitrary ideal of height $c$ and let $\beta(S/I)$ denote the sum of the betti numbers of $S/I$.   
If $S/I$ is a complete intersection (CI), then the Koszul complex is a resolution and $\beta(S/I) = 2^c$.
It has been conjectured that for arbitrary ideals, $\beta(S/I) \geq 2^c$, a fact that was only settled this year by Walker (provided $\Char k \neq 2$) \cite{W}.  
This ``Total Rank Conjecture'' is a weaker version of a conjecture due to Buchsbaum-Eisenbud \cite{BE} and Horrocks \cite{H} that if $I$ has height $c$ then $\beta_i(S/I) \geq {c \choose i}.$  If $c \geq 5$ this is wide open.   For history of this problem and results in special cases, see \cite{ABO, CESundance, D, Erman, W}.

The motivation for this paper stems from work of Charalambous, Evans, and Miller \cite{C,CE, CEM} concerning larger bounds for the sum of the betti numbers when $S/I$ is not a CI.  They proved that if $S/I$ is not a CI then  $\beta(S/I) \geq 2^c + 2^{c-1}$ provided:
\begin{enumerate}
\item $I$ has finite colength and is monomial; or
\item $I$ has finite colength and $c \leq 4$.
\end{enumerate}
Our contribution is thus to remove the finite colength assumption from (1), which is non-trivial.  
Indeed, in \cite{C,CE}, in the context of multi-graded modules of finite length, the authors proved that for monomial ideals of finite colength, if $S/I$ is not a CI then one has $\beta_i(S/I) \geq { c \choose i} + { c-1 \choose  i-1}$  from which they derive the inequality for $\beta(S/I)$ by summing.
This bound on the individual betti numbers is rather strong and is false for monomial ideals not of finite colength.  
For instance, it implies that the last betti number is always at least two, which implies the interesting fact that if $I$ is monomial of finite colength and $S/I$ is Gorenstein then it is a complete intersection - a fact that is not true if $\dim(S/I) > 1$.
Indeed, the authors noted that the sequence $\{1,5,5,1\}$ violates their bound and thus is not the betti sequence of any multi-graded module of finite length.  
However, this is the betti sequence of $S/I$ when $I = (xy,yz,zv,vw,wx) \subset S = k[v,w,x,y,z]$.  
We note that while it is true that localizing can only decrease the betti numbers, in this example, $S/I$ is a CI at each associated prime, so the result in the finite colength case (which requires $I$ to not be a CI) doesn't immediately help via localization.  
This is precisely the obstruction we address in this paper.

What is surprising about Theorem \ref{MainTheorem} is that although the bounds on the individual betti numbers discovered in \cite{C,CE} for monomial ideals of finite colength do not hold for arbitrary monomial ideals, the sum of the betti numbers is still as large as these bounds predict.
Our method is outlined in Section \ref{Section 2}. 
Roughly speaking, we reduce the problem to ideals that are complete intersections on the punctured spectrum, and then find tight bounds on the betti numbers for such ideals. 
We are able to control the sum of the betti numbers in our arguments, even though the beautiful bounds discovered in \cite{C,CE} for the finite length case cannot be extended directly.  We close by summarizing what we can say about the individual betti numbers (see Section \ref{section individual}).

In one sense it seems almost coincidental that $\{1,5,5,1\}$ sums to $2^3 + 2^2$ and by our Theorem, this is essentially the only case (along with $\{1,3,2\}$) where $\beta(S/I) = 2^c + 2^{c-1}$.   We ask the following questions:

\begin{Question}
If $I$ is a homogeneous ideal of height $c$ in a polynomial ring $S$ that is not a CI, is $$\beta(S/I) \geq 2^c + 2^{c-1}?$$  
\end{Question}

We remark that this was raised in \cite{CESundance} when $I$ has finite colength.  Given the content of this paper, it would be interesting to consider whether a proof in the finite colength case would imply an answer in general.  Finally, although we expect that the betti sequences of monomial ideals are rather special, we remark that even for general homogeneous ideals, we know of no ideal $I$ where $\beta(S/I) = 2^c + 2^{c-1}$ but where the betti numbers are different than those in Theorem \ref{MainTheorem}.

\begin{Question} If $I$ is any homogeneous ideal of height $c$ in a polynomial ring and $\sum\beta_i(S/I) = 2^c + 2^{c-1}$, then are the betti numbers of $S/I$ of the form in Theorem \ref{MainTheorem}?
\end{Question}

\subsection{Notation}
Because our analysis of monomial ideals involves referring to particular variables, we shall use the convention that all lowercase letters are assumed to be variables in $S$.   Capital letters, when used to refer to elements in a ring will denote monomials.  If $M$ is a finitely generated multi-graded $S$-module then by $\beta(M)$ we mean $\sum \beta_i(M)$.  If $I$ is generated by a regular sequence we will say that $S/I$ is a complete intersection (CI) and by an abuse of notation we will also say that $I$ is a CI.  By the support of a monomial ideal, we will mean the set of variables that appear in at least one minimal monomial generator.

\section{Reduction to Nearly Complete Intersections}\label{Section 2}
In this section we show that the proof of Theorem \ref{MainTheorem} can be reduced to a special class of ideals we call \emph{nearly complete intersections}, which we define below. 
The rough idea is that localizing an ideal should only decrease the betti numbers, and if ever we can localize to something with either a larger height, or an ideal with fewer variables in its support, then we can use induction to bound the betti numbers.   We will consider only localization at monomial prime ideals, which is essentially the same as inverting variables (see Lemma \ref{Lemma localization}).
Since Theorem \ref{MainTheorem} concerns ideals that are not complete intersections, an obstruction to this procedure will be those ideals, like $I = (xy,yz,zv,vw,wx)$ that are not CI, but such that all monomial localizations are CI. 

\begin{Remark}\label{polarization}
Since the betti numbers and height of an ideal are preserved upon polarization, (see for instance \cite[Corollary 1.6.3]{HH}) in what follows we consider only squarefree monomial ideals.
\end{Remark}

\begin{Lemma}\label{Lemma localization} Suppose that $I$ is a squarefree monomial ideal in $S$ with minimal monomial generators $g(I)$.  
Let $P$ be a monomial prime ideal, that is, a subset of the variables of $S$.  
Let $J$ be the ideal generated by the $g(I)$ after setting the variables not in $P$ equal to $1$. 
Then $\beta_i(S/I) \geq \beta_i(S/J)$.  Further, $\Ht I \leq \Ht J$.
\end{Lemma}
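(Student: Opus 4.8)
The plan is to realize the passage from $I$ to $J$ as a composition of two operations: localization at the monomial prime $P$, followed by an isomorphism that "clears" the inverted variables. First I would note that since $I$ is monomial, localizing $S/I$ at $P$ (or equivalently at the multiplicative set generated by the variables not in $P$) does not increase Betti numbers: a minimal free resolution of $S/I$ over $S$ remains a resolution after tensoring with $S_P$, so $\beta_i((S/I)_P) \leq \beta_i(S/I)$. Thus it suffices to identify $(S/I)_P$ with $S'/J$ in a way compatible with Betti numbers, where $S'$ is the polynomial ring on the variables in $P$.

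For that identification, write $S = S'[y_1,\dots,y_m]$ where $y_1,\dots,y_m$ are the variables not in $P$, and observe that $S_P = S'[y_1^{\pm 1},\dots,y_m^{\pm 1}]$ (up to also inverting those elements of $S'$ not in the maximal ideal of $S'$, but that further localization again only decreases Betti numbers, so we may freely make it). The key step is that the automorphism of $S'[y_1^{\pm 1},\dots,y_m^{\pm1}]$ sending $y_j \mapsto y_j$ but rescaling a monomial generator $G$ of $I$ by the appropriate Laurent monomial in the $y_j$'s carries $I \cdot S_P$ to $(J)\cdot S'[y_1^{\pm1},\dots,y_m^{\pm1}]$, i.e. to the extension of $J$ with the $y_j$'s adjoined and inverted. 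Concretely, setting each $y_j=1$ in a generator is, after inverting the $y_j$, the same as multiplying that generator by a unit. Since $S'[y_1^{\pm1},\dots,y_m^{\pm1}]$ is a localization of $S'$ (in a graded/faithfully flat way — a Laurent polynomial extension), and since $S'[y^{\pm1}]/ (J) \cong (S'/J)[y^{\pm1}]$ is again a localization of $S'/J$, we get $\beta_i(S'[y^{\pm1}]/(J)) \leq \beta_i(S'/J) = \beta_i(S/J)$, the last equality because $J$ is an ideal of $S$ extended from $S'$ (adding variables not in the support is flat and preserves a minimal resolution). Chaining the inequalities gives $\beta_i(S/I) \geq \beta_i(S/J)$.

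For the height statement, I would argue on associated/minimal primes. The minimal primes of a squarefree monomial ideal are themselves monomial primes, and $\Ht I = c$ means every minimal prime of $I$ contains at least $c$ variables. Passing from $I$ to $J$ (setting the $y_j$ equal to $1$) is, on the level of varieties, restricting $V(I) \subset \mathbb{A}^n$ to the coordinate subspace where all $y_j=1$ — or more precisely it is an intersection with a torus followed by a coordinate projection. An irreducible component of $V(J)$ lifts to a component of $V(I)$ contained in $\{y_1=\cdots=y_m=1\}$ of the same dimension, hence of codimension (in that subspace, which has dimension $\dim S'$) at least... — so $\Ht J \geq \Ht I$. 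The cleanest way to see this: if $Q'$ is a minimal prime of $J$ in $S'$, then the monomial prime $Q' + (y_1-1,\dots,y_m-1)$ — or rather, tracking through the automorphism, the monomial prime of $S$ obtained by adjoining appropriate variables — contains $I$ and has height $\Ht Q' + m \geq \Ht Q'$; comparing against the minimal height $c$ of primes of $I$ forces $\Ht Q' \geq c$. I would make this precise by the localization picture: $(S/I)_P$ and $(S/J)\otimes_{S'} S'[y^{\pm1}]$ are isomorphic rings, and dimension drops by exactly $\dim S - \dim S'$ on both sides when inverting the $y_j$, so $\dim S - \Ht I \geq \dim(S/I)_P = \dim S' + m - \Ht J$ wait — better: $\dim (S/I)_P \le \dim S/I$ and equals $\dim S'/J + m$, while $\dim S/I = \dim S - \Ht I \le \dim S' + m - \Ht I$... yes, this gives $\Ht J \le \Ht I$? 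Let me not commit to signs in the sketch; the point is a clean dimension count through the common localization.

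The main obstacle I expect is bookkeeping the rescaling automorphism carefully enough that "set $y_j=1$ in the generators" is literally identified with an ideal extended from $S'/J$ after inverting the $y_j$ — in particular checking that a \emph{minimal} generating set of $I$ maps to a \emph{minimal} generating set of $J$ (it can happen that two generators of $I$ collapse to the same generator of $J$, or a generator becomes redundant, which is exactly why one only gets an \emph{inequality} $\beta_i(S/I) \ge \beta_i(S/J)$ rather than equality), and that Laurent-polynomial and further localizations never increase Betti numbers — which is the standard fact that a resolution stays exact under flat base change, combined with the observation that minimality can only improve. Once those flatness/minimality points are pinned down, both assertions follow.
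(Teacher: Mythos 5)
Your Betti number argument is essentially the paper's proof, but dressed up in more machinery than is needed. The paper simply observes that in $S_P$ the variables outside $P$ are units, so $I\cdot S_P = J\cdot S_P$ — there is no automorphism to invoke (and, strictly speaking, the map you describe that ``rescales a monomial generator $G$ by the appropriate Laurent monomial'' is not a well-defined ring automorphism, since the rescaling factor depends on $G$; what is actually going on is simply that two ideals with generating sets differing by units are equal). Once you have $I_P = J_P$, the chain $\beta_i(S/I)\ge\beta_i(S_P/I_P)=\beta_i(S_P/J_P)=\beta_i(S/J)$ follows exactly as you say, using exactness of localization and the fact that $J$ involves only variables in $P$. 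Your explicit remark about why the resolution might become non-minimal (generators collapsing or becoming redundant) is correct and is the same point the paper makes. So on this part you are on the right track, just with avoidable detours.

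The height statement is where there is a genuine gap. You set up a dimension count through $(S/I)_P$, get tangled in signs, and explicitly decline to commit (``Let me not commit to signs in the sketch''). That is not a proof. Moreover, a careful dimension count through localization is delicate here: one would need to be careful about whether $I\subset P$ at all (if not, $(S/I)_P=0$), and whether the relevant local ring dimensions compare correctly with global heights. The paper sidesteps all of this with a one-line observation you missed: $I\subset J$. Indeed, if $G$ is a minimal generator of $I$, the corresponding generator of $J$ is $G$ with some variable factors removed, hence divides $G$; so $G\in J$, giving $I\subset J$ and therefore $\Ht I\le\Ht J$ immediately. You should replace your dimension-theoretic sketch with this containment argument.
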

\begin{proof}
Since in $S_P/I_P$ all variables not in $P$ are units, it follows that $S_P/I_P = S_P/J_P$.  Since localization is exact, we know that a minimal free resolution of $S$ modules remains exact upon localization at $P$.  It will be minimal precisely when all the maps have entries in $P$.  Hence,  
$$\beta_i(S/I) \geq \beta_i(S_P/I_P) = \beta_i(S_P/J_P) = \beta_i(S/J).$$ The last equality follows since $J$ involves only variables in $P$.  The result on the height follows as $I \subset J$. 
\end{proof}

This observation is enough to recover the Buchsbaum-Eisenbud-Horrocks Rank Conjecture for monomial ideals, which is well-known:
\begin{Proposition}\label{Proposition Horrocks choose}
Suppose that $I$ is a squarefree monomial ideal and that $I$ has an associated prime $P$ of height $c$. Then $\beta_i(S/I) \geq { c \choose i}$.
\end{Proposition}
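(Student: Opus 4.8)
The plan is to reduce to the case where $I = P$ is itself a monomial prime, and there the Koszul complex gives the resolution directly. First I would use Lemma \ref{Lemma localization} with the prime $P$: setting all variables outside $P$ equal to $1$ produces an ideal $J$ supported only on the variables in $P$, with $\beta_i(S/I) \ge \beta_i(S/J)$ and $\Ht I \le \Ht J$. The point is to understand what $J$ looks like when $P$ is an \emph{associated} prime of $I$ of height $c$. Since $I$ is squarefree, $P$ is generated by $c$ variables, say $P = (x_1,\dots,x_c)$, and $P \in \Ass(S/I)$ means $P = (I : M)$ for some monomial $M$; because $I$ is squarefree we may take $M$ squarefree with support disjoint from $\{x_1,\dots,x_c\}$. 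Thus for each $i$ the monomial $x_i M$ lies in $I$, so $I$ has a minimal generator dividing $x_i M$; setting the variables outside $P$ to $1$ kills everything in the support of $M$, so this generator maps to a monomial dividing $x_i$, i.e.\ to $x_i$ itself (it cannot map to $1$, since then $1 \in I_P$, contradicting $\Ht I_P = c > 0$; here I use $\Ht I_P = \Ht I = c$ because $P$ is a minimal prime over $I$ of height equal to $\Ht I$). Hence $x_1,\dots,x_c \in J$, so $(x_1,\dots,x_c) \subseteq J$; combined with $J$ being supported on $\{x_1,\dots,x_c\}$ and $\Ht J \ge c$, we get $J = (x_1,\dots,x_c)$ exactly.

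Now $J$ is generated by a regular sequence of length $c$, so the Koszul complex on $x_1,\dots,x_c$ is a minimal free resolution of $S/J$, giving $\beta_i(S/J) = \binom{c}{i}$. Chaining the inequalities yields
$$\beta_i(S/I) \ge \beta_i(S/J) = \binom{c}{i},$$
which is the claim.

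The one genuinely delicate point is the claim that each chosen generator of $I$ maps to $x_i$ rather than to $1$ after specialization; equivalently, that after inverting the outside variables the localized ideal does not become the unit ideal. This is where associatedness (as opposed to mere containment $P \supseteq I$) is essential: it guarantees $P$ is minimal over $I$ of the right height, so $I_P$ is $P_P$-primary of height $c$ and in particular proper, forcing $J$ to be proper as well. Everything else is bookkeeping with squarefree monomials and the exactness of localization already recorded in Lemma \ref{Lemma localization}. I would also remark that one does not even need $P$ associated in the full sense — it suffices that $P$ be a minimal prime of $I$ of height $c = \Ht I$ — but stating it for associated primes is the form that will be convenient later.
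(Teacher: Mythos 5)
Your proof takes essentially the same route as the paper's: localize at $P$ via Lemma~\ref{Lemma localization}, identify the result with $S/P$, and read off the betti numbers from the Koszul complex on the $c$ variables generating $P$. The paper gets the identification $S_P/I_P = S_P/P_P$ in one line by observing that a squarefree monomial ideal is radical, so every associated prime is minimal and a radical ideal localizes at a minimal prime to that prime; you rederive this by hand from the description $P = (I:M)$, which is fine. One small slip: you write $\Ht I_P = \Ht I = c$, but $\Ht I = c$ need not hold --- take $I = (xy,xz)$, which has $\Ht I = 1$ while the associated prime $(y,z)$ has height $2$. The fact you actually need, that $J$ is proper, follows already from the containment $I \subseteq P$: every generator of $I$ is divisible by some $x_i$, that divisibility survives the substitution, so $J \subseteq (x_1,\dots,x_c)$ (equivalently, $\Ht I_P = c$ is correct because $P$ is the unique minimal prime of $I$ inside $P$, but this is not the same as $\Ht I = c$).
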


\begin{proof} Since $I$ can have no embedded primes, we see that $S_P/I_P = S_P/P_P$.  Note $P$ is a prime monomial ideal and thus a CI. Hence $\beta_i (S/I) \geq \beta(S_P/I_P) = \beta(S_P/P_P) ={c \choose i}$ by Lemma \ref{Lemma localization}.
\end{proof}

\begin{Remark}\label{Remark Modules}
This idea can also be extended to prove that if $M$ is a multi-graded module whose annihilator has height $c$ then $\beta_i(M) \geq {c \choose i}$.  For the details, see \cite[Section 4]{C}.
\end{Remark}
We will frequently make use of Lemma \ref{Lemma localization} in the case that $P$ is the ideal generated by all the variables but one variable $x$.  If this is the case, we will write $I(x=1)$ to denote the ideal $J$ described in  Lemma \ref{Lemma localization}.

\begin{Definition}We say that a squarefree monomial ideal $I$ is nearly a complete intersection (NCI) if it is generated in degree at least two, is not a CI, and for each variable $x$ in the support of $I$, $I(x=1)$ is a CI.
\end{Definition}

We now outline our basic plan of attack: 

\begin{Algorithm}\label{Algorithm}
Suppose that $I$ is a squarefree monomial ideal of height $c$ that is not a CI.  We describe the following algorithm:

\begin{itemize} \item If some variable $x$ is a generator of $I$, then choose such an $x$ and return $J$, the ideal generated by the remaining minimal generators. We say that $I$ is a cone over $J$.  Notice: \begin{itemize} \item $\Ht J = c-1$; \item $\beta(S/I) = 2\beta(S/J)$; \item If $I$ is not a CI then neither is $J$. \end{itemize}
\end{itemize}
If no variable is a generator then: 
\begin{itemize} \item If there is a variable $x$ such that $I(x=1)$ is not a CI, then choose such an $x$ and return $J=I(x=1)$.  Notice $\beta(S/I) \geq \beta(S/J)$ and $\Ht J \geq \Ht I$.
\item If for each variable $x$, $I(x=1)$ is a complete intersection then return $I$, which is NCI.
\end{itemize}
\end{Algorithm}

The following theorem will be proven in Section \ref{Section NCI}. 

\begin{Theorem}\label{ThmNCI}
If $I$ is NCI of height $c$ then $\beta(S/I) \geq 2^c + 2^{c-1}$.

Equality holds in only two cases: if $c = 2$ and the betti numbers of $S/I$ are $\{1,3,2\}$, and if $c=3$ and the betti numbers of $S/I$ are $\{1,5,5,1\}.$
\end{Theorem}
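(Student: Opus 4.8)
My approach is to induct on the number of variables $n$ in the support of the NCI ideal $I$, or equivalently on the height $c$, with the goal of pinning down the structure of NCI ideals tightly enough that the Betti count becomes a finite check. The first step is to establish a normal form: since $I$ is NCI, no variable is a generator, $I$ is generated in degrees $\geq 2$, $I$ itself is not a complete intersection, but every $I(x=1)$ \emph{is} a complete intersection. I want to extract strong combinatorial consequences from the hypothesis "$I(x=1)$ is a CI for every $x$." Setting a variable to $1$ merges generators and can only drop the minimal generating set and the height; demanding that the result always be a regular sequence is extremely restrictive. I expect this forces $I$ to be (close to) the edge ideal of a cycle: the running example $I=(xy,yz,zv,vw,wx)$ suggests that NCI ideals of height $c$ are essentially $(x_1x_2, x_2x_3, \dots, x_{c}x_{c+1}, x_{c+1}x_1)$ on $c+1$ variables, whose quotient has Betti numbers given by the well-known resolution of a cycle. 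Identifying this class — and proving nothing else can occur — is the heart of the argument.

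The key steps, in order, would be: (1) Show that an NCI ideal is \emph{unmixed} of height $c$ with $c+1$ variables in its support, or more precisely bound the number of generators and variables in terms of $c$. A natural device: if $I$ is NCI of height $c$, localize at a minimal prime $P$ (a subset of the variables, automatically a CI since monomial primes are CIs); since $I$ has no embedded primes, $I_P = P_P$, so $\beta_i(S/I) \geq \binom{c}{i}$ already by Proposition~\ref{Proposition Horrocks choose}. So the task is to find the \emph{extra} $2^{c-1}$. (2) Use the NCI condition in the form: for any variable $x$, $I(x=1)$ has height exactly $c$ (it cannot have larger height than the $c+1$-ish variables allow, and being a CI of height $c$ it needs $c$ generators, each a squarefree monomial), and run induction/direct analysis on how setting $x=1$ can produce a regular sequence. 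I expect this pins the generators down to the cyclic pattern up to relabeling, possibly with some "padding" by a regular sequence of variables' worth of generators that just contributes a Koszul tensor factor. (3) Once the structure is known, compute $\beta(S/I)$ directly. For the $c$-cycle, the Taylor/Lyubeznik or the explicit minimal resolution gives total Betti number exactly $2^c + 2^{c-1}$ in the small cases $c=2$ ($\{1,3,2\}$, from $(xy,yz,zx)$... indeed height $2$) and $c=3$ ($\{1,5,5,1\}$), and strictly more when $c \geq 4$ because the cycle's Betti numbers grow faster than $2^c+2^{c-1}$ — this last monotonicity is a clean binomial-sum estimate.

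The main obstacle is step (1)–(2): proving that the NCI hypothesis really does force the cyclic structure (or a tensor-by-Koszul extension of a bounded list of base cases), rather than merely constraining it. Setting variables to $1$ is a coarse operation and it is conceivable that some exotic squarefree ideal has all of its $n$ monomial-variable-localizations equal to complete intersections without being a cycle; ruling this out requires a careful combinatorial argument about which collections of squarefree monomials have the property that deleting any variable yields a regular sequence. A promising line is to argue that if $I$ is NCI and $x$ is any variable, the CI $I(x=1)$ must have the same height $c$ as $I$, and then compare minimal generators of $I$ with those of $I(x=1)$: each generator of $I$ either already lies in $I(x=1)$ or becomes a generator after deleting $x$, and the regular-sequence condition on the latter, for \emph{every} choice of $x$, should be turnable into a statement that the generators pairwise share variables in a cyclic fashion. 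I would also keep Lemma~\ref{Lemma localization} and Algorithm~\ref{Algorithm} in reserve to handle degenerate cases (e.g. if after all reductions the support is too small, $I$ must be a CI, contradiction). Once the classification is in hand, the Betti-number bound and the equality characterization are a short computation plus a monotonicity check, so the real work — and the real risk — is entirely in the combinatorial classification of NCI ideals.
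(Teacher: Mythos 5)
Your plan hinges on a classification that is false: NCI ideals are not (edge ideals of) cycles, nor are they ``cycles padded by a regular sequence of variables'' --- padding by a variable generator is exactly what the NCI definition forbids, since NCI ideals are generated in degree at least two. The paper itself contains a counterexample to the claim you place at the heart of your argument: $I = (xa,xb,xcd, ah,ak,bh,bk,ac,ad,bc,bd,hk)$ is NCI of height $4$ with $12$ generators and $8$ variables in its support, very far from a $5$-cycle. Even the simplest NCI ideal of height $1$, namely $(xa,xb,xc)$, is not a cycle. More generally, the condition ``$I(x=1)$ is a CI for every variable $x$'' is compatible with a wide variety of combinatorics, and your step (1) claim that the support has exactly $c+1$ variables is false as well. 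Your proposal does flag this as ``the real risk,'' and it is: ruling out such ideals is not a gap to be patched but an assumption that is simply untrue, so the Betti computation for cycles in your step (3) addresses only a small fraction of what must be bounded.

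What the paper does instead is deliberately \emph{avoid} a full classification of NCI ideals. It extracts only two soft structural facts (Lemmas~\ref{Lemma NCI} and~\ref{everything has a gcd}: distinct generators share at most one variable, and each generator shares some variable with another), puts $I$ in the form $x(a_1,\ldots,a_{n-1},F_n)+J+K$ after reducing to the unmixed case, and then bounds Betti numbers directly. Two decomposition tools substitute for classification: the Betti splitting $\beta_i(I)=\beta_i(xJ')+\beta_i(K')+\beta_{i-1}(xJ'\cap K')$ when the linear-resolution hypothesis holds (the $n=c$ case and the $n<c$ cases in Propositions~\ref{Height c Case} and~\ref{simple case linear resolution}), and the pruning formula $\beta_i^S(S/I)=\beta_i^R(S/(I,x))+\beta_{i-1}^R((I:x)/I)$ together with the explicit splitting of the presentation of $(I:x)/I$ (Theorem~\ref{presentation matrix} and Corollaries~\ref{presentation splits},~\ref{presentation split betti number}) when some generator has degree $\geq 3$ (Proposition~\ref{The NonlinearCase}). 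The ``cycle'' picture you anticipate appears only \emph{a posteriori} in the equality analysis for $c=2,3$, not as a structure theorem. If you want to rescue your approach, you would need to replace ``classify NCI ideals'' with ``bound Betti numbers of NCI ideals via a decomposition,'' which is precisely the pivot the paper makes; as written, your step (2) cannot be carried out.
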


Using this Theorem we are able to prove Theorem \ref{MainTheorem}.
\begin{proof}[Proof of Theorem \ref{MainTheorem}]
By Remark \ref{polarization} we may assume that $I$ is squarefree.  If $I$ is NCI, we are done.  If not, we can iterate Algorithm \ref{Algorithm} until we arrive at a NCI ideal $J$.  
In so doing, suppose we have encountered $d$ cones.  
Then we have that $\Ht J \geq c -d$ and 
$$\beta(S/I) \geq 2^d\beta(S/J).$$
By Theorem \ref{ThmNCI} we know that $\beta(S/J) \geq 2^{\Ht J} + 2^{\Ht J -1}\geq 2^{c-d} +2^{c-d-1}$.
Thus
$$\beta(S/I) \geq 2^c + 2^{c-1}.$$

Notice that equality holds only if $\Ht J = c-d$,  $\beta(S/J) = 2^{c-d} + 2^{c-d-1}$, and at each stage of the algorithm, equality of betti numbers holds.
By Theorem \ref{ThmNCI}, this happens only if $\Ht J = 2$ or $\Ht J =3$ in which case the betti numbers of $S/J$ are respectively $\{1,3,2\}$ or $\{1,5,5,1\}$.  Thus the betti numbers of $S/I$ are given by cones on these as required.
\end{proof}

\section{Two Decomposition Techniques}
Having reduced the problem to studying NCI ideals, we roughly classify them, and compute bounds for their betti numbers. 
As we show in the next section, we require two very different techniques to bound the betti numbers. 
The first technique, developed in \cite{FHV}, comes from the world of betti splittings which gives the betti numbers of $I$ in terms of the betti numbers of the three related ideals.  
This only works in certain cases but has the benefit that everything can be stated in terms of ideals, our subject of study.
The second technique, developed in \cite{B} works in general but relates the betti numbers of $S/I$ to those of $S/(I,x)$ and the module $H = (I:x)/I$ both regarded as modules over the polynomial ring $S/(x)$.  The downside of this approach is that $H$ need not be a cyclic module, and hence induction is not possible.  We summarize these two ideas in this section.   

\begin{Proposition}[Corollary 2.7 of \cite{FHV}]\label{betti splitting}
Suppose that $I$ is a squarefree monomial ideal and $I$ can be written as $I = xJ + K$ where no generator of $K$ is divisible by the variable $x$.  If $J$ has a linear resolution then 
$$\beta_i(I) = \beta_i(J)  + \beta_i(K)  + \beta_{i-1}(J\cap K), \ \ \mbox{ for all $i$}.$$
\end{Proposition}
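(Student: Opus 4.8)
The plan is to recognize $I = xJ + K$ as a Betti splitting in the sense of Francisco--H\`a--Van Tuyl and to read the identity off a Mayer--Vietoris sequence that degenerates into short exact sequences. Throughout, I take the generators of $xJ$ to be exactly the minimal generators of $I$ divisible by $x$, so that $J$ is generated by the squarefree monomials $\{g/x : g \text{ a minimal generator of } I, \ x \mid g\}$; in particular $J$, $K$, and hence $J \cap K$ are all extended from the subring $S'$ of $S$ in the variables other than $x$. Two routine observations: multiplication by $x$ is an isomorphism of $S$-modules $J \xrightarrow{\ \sim\ } xJ$ (shifting the $\mathbb{Z}^n$-degree by $\deg x$), so $\beta_i(xJ) = \beta_i(J)$; and for monomial ideals $xJ \cap K = x(J\cap K)$, since a minimal generator of $K$ dividing an element $xm$ of $xJ\cap K$ (with $m \in J$) is prime to $x$ and hence divides $m$. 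Thus $\beta_i(xJ\cap K) = \beta_i(J\cap K)$, and it suffices to prove
$$\beta_i(I) = \beta_i(xJ) + \beta_i(K) + \beta_{i-1}(xJ\cap K) \qquad \text{for all } i.$$

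The heart of the matter is the short exact sequence
$$0 \longrightarrow xJ\cap K \longrightarrow xJ \oplus K \longrightarrow I \longrightarrow 0$$
(with maps $m \mapsto (m,-m)$ and $(a,b)\mapsto a+b$) and its long exact sequence in $\Tor^S_\bullet(k,-)$, in which the relevant arrows are the pairs
$$\psi_i \colon \Tor^S_i(k, xJ\cap K) \longrightarrow \Tor^S_i(k, xJ) \oplus \Tor^S_i(k, K)$$
induced by the two inclusions. I claim $\psi_i = 0$ for all $i$. Granting this, the long exact sequence decomposes into short exact sequences $0 \to \Tor_i(xJ) \oplus \Tor_i(K) \to \Tor_i(I) \to \Tor_{i-1}(xJ\cap K) \to 0$, and comparing ranks gives exactly the displayed formula. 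So everything reduces to showing that each inclusion $xJ\cap K \hookrightarrow xJ$ and $xJ\cap K \hookrightarrow K$ induces the zero map on $\Tor^S_i(k,-)$.

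The inclusion into $K$ is handled by multidegrees: since $K$ is extended from $S'$, the module $\Tor^S_i(k,K)$ is supported in the $\mathbb{Z}^n$-multidegrees whose $x$-coordinate is $0$, while $xJ\cap K = x(J\cap K)$ is generated by monomials divisible by $x$ exactly once, so $\Tor^S_i(k,xJ\cap K)$ is supported in multidegrees whose $x$-coordinate is $1$; a multigraded homomorphism between modules with disjoint multidegree supports is zero. The inclusion into $xJ$ is where the linearity hypothesis enters, through the estimate I expect to be the crux: \emph{every minimal generator of $J\cap K$ has degree at least $e+1$}, where $e$ is the degree in which $J$ is generated. Indeed, a degree-$e$ minimal generator of $J\cap K$ would, as $J$ is equigenerated in degree $e$, be a minimal generator $m$ of $J$ lying in $K$; then $xm$ is a minimal generator of $xJ$, hence of $I$, whereas $m \in K$ is divisible by some minimal generator $h$ of $I$ with $x \nmid h$, so that $h \mid xm$ and $h \ne xm$ --- contradicting that distinct minimal generators of a monomial ideal never divide one another. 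Therefore $J\cap K$, and so $x(J\cap K)$, is generated in degrees $\ge e+2$, whence $\Tor^S_i(k, xJ\cap K)_j = 0$ for $j < e+2+i$; on the other hand $xJ \cong J$ up to a total-degree shift of $1$, so the $e$-linear resolution of $J$ shows $\Tor^S_i(k, xJ)_j = 0$ for $j \ne e+1+i$. The two are concentrated in disjoint internal degrees, so the induced map is zero.

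Assembling the two vanishings gives $\psi_i = 0$ for all $i$, the Mayer--Vietoris sequence splits as required, and $\beta_i(I) = \beta_i(J) + \beta_i(K) + \beta_{i-1}(J\cap K)$ follows. The single non-formal ingredient is the degree bound on the generators of $J\cap K$: it is exactly here that the hypothesis that $J$ has a linear resolution (in particular, that $J$ is equigenerated) together with the $x$-divisibility partition of the minimal generators of $I$ is used, and it is where the argument genuinely fails without linearity, since then the connecting maps in the Mayer--Vietoris sequence need not vanish.
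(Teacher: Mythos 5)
The paper does not give a proof of this proposition; it is quoted directly from Corollary~2.7 of \cite{FHV}, and your argument is a correct reconstruction of precisely the Betti-splitting proof used there: the Mayer--Vietoris long exact sequence coming from $0 \to xJ\cap K \to xJ\oplus K \to I \to 0$, a multidegree-in-$x$ support argument to kill the component of $\psi_i$ landing in $\Tor^S_i(k,K)$, and the total-degree concentration from the linear resolution of $J$ to kill the component landing in $\Tor^S_i(k,xJ)$. One small slip: your divisibility argument shows that $J\cap K$ is generated in degrees $\geq e+1$, not $\geq e+2$ as written; but the consequence you actually use --- that $x(J\cap K)$ is generated in degrees $\geq e+2$, so $\Tor^S_i(k,x(J\cap K))_j=0$ for $j<e+2+i$, which is disjoint from the single internal degree $e+1+i$ in which $\Tor^S_i(k,xJ)$ is concentrated --- is correct and suffices.
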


\begin{Proposition}[Theorem 2.3 and Proposition 2.5 of \cite{B}]\label{adam pruning}
Let $I$ be a squarefree monomial ideal and let $x$ be a variable. Let $J = (I,x)$ and regard $H := (I:x)/I$ and $S/J$, as modules over the polynomial ring $R= S/(x)$.  Then 
$$\beta_i^S(S/I) = \beta^R_i(S/J) + \beta_{i-1}^R (H).$$
\end{Proposition}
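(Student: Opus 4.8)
The plan is to exploit that $x$ is one of the variables, by giving $S/I$ an extra $\mathbb{N}$-grading and reading its Betti numbers off that grading. Grade $S$ by $x$-degree (assign $x$ degree $1$ and every other variable degree $0$) and let $R=S/(x)=k[y_1,\dots,y_n]$ be the degree-zero subring, acting on $S/I$ by restriction along $R\hookrightarrow S$. Since $I$ is monomial it is homogeneous, so $S/I=\bigoplus_{d\ge 0}(S/I)_d$ as graded $R$-modules. A short computation with monomials, which is where squarefreeness enters (through $(I:x^{d})=(I:x)$ for all $d\ge 1$), shows $(S/I)_{0}\cong R/\overline I$ and $(S/I)_{d}\cong R/\overline{(I:x)}$ for every $d\ge 1$, where $\overline{(\,\cdot\,)}$ denotes the image in $R$; moreover multiplication by $x$ sends $(S/I)_{0}$ to $(S/I)_{1}$ by the natural surjection $f\colon R/\overline I\twoheadrightarrow R/\overline{(I:x)}$ and sends $(S/I)_{d}$ to $(S/I)_{d+1}$ by the identity for $d\ge 1$. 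Note $R/\overline I\cong S/(I,x)=S/J$ and $\Ker f\cong (I:x)/I=H$, so there is a short exact sequence of $R$-modules $0\to H\to S/J\xrightarrow{\,f\,}R/\overline{(I:x)}\to 0$.

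Next I would compute $\Tor^{S}_{i}(k,S/I)$ from the Koszul complex. Factoring the Koszul complex of $S$ as $K^{S}=K_{(x)}\otimes_{S}K_{(\underline y)}$ with $K_{(x)}=[\,S\xrightarrow{\,x\,}S\,]$, the complex $K^{S}\otimes_{S}S/I$ becomes the mapping cone of multiplication by $x$ on $G_{\bullet}:=K_{(\underline y)}\otimes_{S}S/I$. Since $\underline y=(y_{1},\dots,y_{n})$ is precisely the list of variables of $R$, one has $H_{j}(G_{\bullet})=\Tor^{R}_{j}(k,S/I)$, and the cone produces short exact sequences
$$0\to\coker\!\bigl(x\colon\Tor^{R}_{i}(k,S/I)\to\Tor^{R}_{i}(k,S/I)\bigr)\to\Tor^{S}_{i}(k,S/I)\to\Ker\!\bigl(x\colon\Tor^{R}_{i-1}(k,S/I)\to\Tor^{R}_{i-1}(k,S/I)\bigr)\to 0.$$

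The rest is bookkeeping with the $x$-grading. Since everything is $x$-graded, $\Tor^{R}_{j}(k,S/I)=\Tor^{R}_{j}(k,S/J)\oplus\bigoplus_{d\ge 1}\Tor^{R}_{j}(k,R/\overline{(I:x)})$ with the $d$-th summand in degree $d$, and multiplication by $x$ acts as $(f_{*})_{j}$ from degree $0$ to degree $1$ and as the identity between consecutive degrees $\ge 1$. Therefore $\coker(x)$ contributes $\Tor^{R}_{i}(k,S/J)$ (degree $0$) and $\coker(f_{*})_{i}$ (degree $1$) and nothing more, while $\Ker(x)$ contributes only $\Ker(f_{*})_{i}$ (degree $0$). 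Reading the long exact $\Tor^{R}(k,-)$-sequence of $0\to H\to S/J\xrightarrow{f}R/\overline{(I:x)}\to 0$ at the term $\Tor^{R}_{i-1}(k,H)$ yields $\beta^{R}_{i-1}(H)=\dim_{k}\coker(f_{*})_{i}+\dim_{k}\Ker(f_{*})_{i-1}$. Adding up, $\beta^{S}_{i}(S/I)=\beta^{R}_{i}(S/J)+\dim_{k}\coker(f_{*})_{i}+\dim_{k}\Ker(f_{*})_{i-1}=\beta^{R}_{i}(S/J)+\beta^{R}_{i-1}(H)$, as asserted.

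I expect the main obstacle to be the explicit module-theoretic analysis of the first paragraph. One must deal with the degenerate cases where $x$ divides no minimal generator of $I$ (so $H=0$) and where $x$ itself is a minimal generator (so $(I:x)=S$ and $R/\overline{(I:x)}=0$), and, more substantively, one must genuinely use squarefreeness so that all graded pieces $(S/I)_{d}$ with $d\ge 1$ are literally the \emph{same} module, with identity transition maps. For a general monomial ideal those pieces only fit into a chain of surjections $R/\overline I\twoheadrightarrow R/\overline{(I:x)}\twoheadrightarrow R/\overline{(I:x^{2})}\twoheadrightarrow\cdots$ that stabilizes after finitely many steps, and the bookkeeping of the third paragraph must be done telescopically; this is presumably why the cited statement is split into two parts. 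Everything else is formal.
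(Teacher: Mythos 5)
The paper states this proposition by citing Theorem~2.3 and Proposition~2.5 of~\cite{B} and supplies no proof of its own, so there is no internal argument to compare against; I will therefore only assess your argument on its merits.

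Your proof is correct. The key identifications in the first paragraph check out: with the $x$-grading, $(S/I)_0\cong R/\overline{I}\cong S/J$, $(S/I)_d\cong R/\overline{(I:x)}$ for $d\ge 1$ (here squarefreeness gives $(I:x^d)=(I:x)$), multiplication by $x$ is the quotient map $f$ in degree $0$ and the identity in higher degrees, and $\ker f\cong (I:x)/I=H$ as $R$-modules. The factorization $K^S\otimes_S S/I\cong\mathrm{cone}\bigl(x\colon G_\bullet\to G_\bullet\bigr)$ with $G_\bullet=K_{(\underline y)}\otimes_S S/I$ computing $\Tor^R_\bullet(k,S/I)$ is exactly right, and because $\Tor$ commutes with (possibly infinite) direct sums, the graded analysis of $\coker(x)$ and $\ker(x)$ in the third paragraph is valid: $\coker(x)$ in $\Tor^R_i$ contributes $\Tor^R_i(k,S/J)$ in degree $0$ and $\coker(f_*)_i$ in degree $1$, while $\ker(x)$ in $\Tor^R_{i-1}$ is $\ker(f_*)_{i-1}$. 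The long exact sequence of $0\to H\to S/J\to R/\overline{(I:x)}\to 0$ then gives $\beta^R_{i-1}(H)=\dim_k\coker(f_*)_i+\dim_k\ker(f_*)_{i-1}$, and the claimed identity follows. Your closing observation --- that without squarefreeness one gets a finite telescoping tower $R/\overline{I}\surjects R/\overline{(I:x)}\surjects R/\overline{(I:x^2)}\surjects\cdots$ and a more involved bookkeeping, which is plausibly why \cite{B} splits the statement into a general theorem plus a squarefree corollary --- is exactly the right remark.
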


\begin{Example} Consider  $I = (uv,vw,wx,xy,yz,zu) \subset S$.  It has height $3$ and betti numbers $\{1,6,9,6,2\}$. As a module over $R = S/(x)$, we have that $S/(I,x) = R/(uv,vw,yz,zu)$ with betti numbers $\{1,4,4,1\}$.   The module $H = (I:x)/I$ is minimally generated by two elements (namely $w$ and $y$) and has the following presentation over $R$:
$$\xymatrix{ R^5 \ar[rrrr]^{\begin{pmatrix}
v &  zu & 0 & 0 &  y \\ 
0 & 0   & z & uv & -w
\end{pmatrix}}  & & & & R^2 \ar[r]& H.}$$
Its betti numbers are $\{2,5,5,2\}$. 
\end{Example}

We are able to explicitly write down a presentation for $H$, which will be helpful in computing $\beta_i^R(H)$.  

\subsection{The Presentation Matrix}
Let $H = (I:x)/I$ and regard $H$ as an $R = S/(x)$ module.  Clearly, if $I$ is a squarefree monomial ideal, and $xF_1,\ldots, xF_n$ are those minimal generators divisible by $x$ then the images of the $F_i$ will generate $H$.  Hence we have a surjective map
$$\xymatrix{R^n\ar[r]^{\phi} &  H.}$$ 
We seek a set of generators for the kernel of $\phi$.   Let $e_1, \ldots, e_n$ denote the usual basis of $R^n$.  If $cF_i \in I$ then clearly $c e_i \in \ker \phi$.   It is easy to see that these are precisely the vectors of the form $g e_i$ in the kernel of $\phi$.  The set of minimal generators of $\ker \phi$ of this form is
$$\Omega = \{ c e_i  | c \mbox{ is a minimal generator of } (I:F_i)\}.$$
An element $\sum c_j e_j$ is in $\ker\phi$ if and only if $c_{j} \in R$ and $\sum c_j F_j \in I$.   Since the $F_i$ are monomials and $I$ is a monomial ideal this condition is that the non-canceling terms of this sum are in $I$.  Let $v = \sum c_j e_j\in \ker \phi$.  We subtract off multiples of elements in $\Omega$ if necessary to assume that $\sum c_j F_j = 0$.  But such $c_j$ are just syzygies of the ideal $(F_i)$ in the polynomial ring $R$.   Generators can be computed by (for instance) the Taylor complex. We have proven: 

\begin{Theorem}\label{presentation matrix}
Let $I$ be a squarefree monomial ideal in $S$ and suppose that $xF_1, \ldots, xF_n$ are the minimal generators of $I$ that are divisible by $x$.    Let $N$ be the block diagonal matrix, the $i$th block of which is the row matrix consisting of the minimal generators of $I:F_i$ (over $R$). Let $P$ be the matrix whose columns are the minimal syzygies of the ideal generated by the $(F_i)$.  Then the block matrix $M = (N | P )$ is a presentation matrix for $H$.
\end{Theorem}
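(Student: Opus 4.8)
The plan is to construct a presentation of $H$ directly from the surjection $\phi\colon R^n\to H$ sending $e_i$ to the class of $F_i$, and then to compute $\ker\phi$ by a monomial bookkeeping argument that separates the ``diagonal'' relations (coming from the columns of $N$) from the genuine syzygies of $(F_1,\dots,F_n)$ (coming from the columns of $P$).

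First I would justify that $\phi$ is well defined and surjective. Since $I$ is squarefree and $xF_1,\dots,xF_n$ are exactly the minimal generators of $I$ divisible by $x$, no $F_i$ involves $x$, so each $F_i\in(I:x)$ and the map makes sense over $R=S/(x)$. Moreover $(I:x)=(F_1,\dots,F_n)+I$, because $(I:x)$ is generated by the $F_i$ together with the minimal generators of $I$ not divisible by $x$, and the latter already lie in $I$; hence the classes of the $F_i$ generate $H$.

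Next I would describe $\ker\phi$. A vector $v=\sum_j c_je_j$ with $c_j\in R$ lies in $\ker\phi$ precisely when $\sum_j c_jF_j\in I$. Two families of such vectors are visible: (i) for each $i$, any $c$ with $cF_i\in I$, i.e.\ $c\in(I:F_i)$, yields $ce_i\in\ker\phi$, and these span the submodule $\bigoplus_i\bigl((I:F_i)R\bigr)e_i$, which is generated by the columns of $N$; (ii) any relation $\sum_j p_jF_j=0$ over $R$ yields $\sum_j p_je_j\in\ker\phi$, and such relations form the syzygy module of $(F_1,\dots,F_n)$ over $R$, a minimal generating set of which is recorded in the columns of $P$ (finite generation being guaranteed, e.g.\ by the Taylor complex). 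It then remains to show these two families generate all of $\ker\phi$. Given $v=\sum_j c_je_j\in\ker\phi$, expand each $c_j$ into its monomial terms so that $\sum_{j,k}c_{j,k}F_j$ equals some $w\in I$. Because $I$ is a monomial ideal, every monomial of $w$ lies in $I$; so for each term $c_{j,k}F_j$, either $c_{j,k}F_j\in I$, in which case $c_{j,k}\in(I:F_j)$ and $c_{j,k}e_j$ lies in the span of the columns of $N$, or $c_{j,k}F_j\notin I$, in which case the coefficients of all terms sharing that monomial must cancel. Subtracting from $v$ all terms of the first type leaves a vector $v'=\sum_j c_j'e_j$ with $\sum_j c_j'F_j=0$, i.e.\ a syzygy of $(F_1,\dots,F_n)$ over $R$, hence in the span of the columns of $P$. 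Thus $\ker\phi$ is generated by the columns of $M=(N\mid P)$, as asserted.

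The only delicate point I anticipate is this last bookkeeping step: one must check that grouping the expansion $\sum_{j,k}c_{j,k}F_j$ by underlying monomial interacts correctly with the $R$-module structure, so that the subtracted ``diagonal'' part genuinely lies in the column span of $N$ and the leftover is genuinely an $R$-syzygy of the $F_j$ with no residual dependence on $x$; and one should note that passing between the ideal quotient $(I:F_i)$ in $S$ and its image in $R$ is harmless, because $F_i$, and every minimal generator of $(I:F_i)$ that survives in $R$, is $x$-free. Everything else is formal, and no claim of minimality of the presentation is required.
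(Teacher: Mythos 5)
Your proposal is correct and takes essentially the same approach as the paper: set up the surjection $\phi\colon R^n\to H$, observe that the ``diagonal'' kernel elements $ce_i$ with $c\in(I:F_i)$ account for the columns of $N$, and then subtract these off to reduce any kernel element to a genuine syzygy of $(F_1,\dots,F_n)$, which is handled by the columns of $P$. You simply spell out more carefully the monomial-bookkeeping step (grouping terms by underlying monomial and using that $I$ is a monomial ideal) and the identity $(I:x)=(F_1,\dots,F_n)+I$, both of which the paper treats tersely.
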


\begin{Example}
Consider the following ideal $I$ of height $4$:
$$I = (xa,xb,xcd, ah,ak,bh,bk,ac,ad,bc,bd,hk), \ \  \{\beta_i(S/I)\} = \{ 1, 12, 30, 34, 21, 7, 1\}.$$
The presentation matrix for $H = (I:x)/I$ will have three rows - one for the generators $a,b,cd$ respectively.  Theorem \ref{presentation matrix} says a presentation matrix is: 
$$\left(\begin{array}{ccccccccccccccc}
c & d & h & k & 0 & 0 & 0 & 0 & 0 & 0 & 0  &  b  & hk & 0\\
0 & 0 & 0 & 0 & c & d & h & k & 0 & 0 & 0  &  -a & 0   & hk\\
0 & 0 & 0 & 0 & 0 & 0 & 0 & 0 & a & b & hk & 0  & -a  & -b
\end{array}\right)$$
Notice that the last two columns are not minimal relations.   Thus the following is actually a minimal presentation matrix, and notice it is block diagonal:
$$\left(\begin{array}{ccccccccc|ccc}
c & d & h & k & 0 & 0 & 0 & 0 & b & 0 & 0 & 0    \\
0 & 0 & 0 & 0 & c & d & h & k & -a& 0 & 0 & 0   \\
\hline
0 & 0 & 0 & 0 & 0 & 0 & 0 & 0 & 0 & a & b & hk   
\end{array}\right).$$  This means that $H$ has $R/(hk,a,b)$ as a direct summand and exemplifies the following Corollary.  The betti numbers of $H$ are $\{3,12,19,15, 6,1\}$ and the betti numbers of $S/(I,x)$ (as an $R$-module) are $\{1,9,18,15,6,1\}$. 
\end{Example}

\begin{Corollary}\label{presentation splits}
Let $x\in S$ be a variable and let $R = S/(x)$ be the polynomial ring in one fewer variable.  Suppose that $n\geq 2$ and $F_1, \ldots, F_n$ are squarefree monomials none divisible by $x$ that form a regular sequence. 
Let $K$ be a squarefree monomial ideal none of whose generators are divisible by $x$.  Let 
$$I = x(F_1, \ldots, F_n) + K.$$
Suppose that $F_iF_n \in I$ for $i = 1, \ldots, n-1$.  
Let $L = I:F_n$.  Then as $R$-modules, 
$$H := \frac{I:x}{I} \cong \frac{R}{L} \oplus H'$$
where $H'$ is a nonzero module.  Then 
$$\Ht \Ann H' \geq \Ht \Ann H \geq \Ht I -1, \ \ \Ht L \geq \Ht I-1.$$
and $\beta^R(H) = \beta^{R}(S/L) + \beta^{R}(H')$.
\end{Corollary}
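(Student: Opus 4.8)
The plan is to leverage Theorem \ref{presentation matrix} directly. With $I = x(F_1,\dots,F_n) + K$ and the $F_i$ forming a regular sequence, the presentation matrix for $H$ described there is $M = (N \mid P)$, where $N$ is block diagonal with $i$th block the row of minimal generators of $I:F_i$ over $R$, and $P$ records the minimal syzygies of $(F_1,\dots,F_n)$. Since the $F_i$ form a regular sequence, the syzygy module of $(F_1,\dots,F_n)$ is generated by the Koszul relations $F_i e_j - F_j e_i$. The hypothesis $F_iF_n \in I$ for $i=1,\dots,n-1$ is exactly what forces each Koszul relation involving the index $n$ to be redundant: $F_n e_i - F_i e_n$ has both entries $F_n$ and $F_i$ lying in (the relevant colon ideals, hence) the row span of $N$, so those columns of $P$ can be deleted without losing minimality of the presentation. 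After this pruning, no remaining column of the presentation has a nonzero entry in row $n$ (the $n$th row of $N$ contributes the block for $I:F_n = L$, and all surviving syzygy columns involve only indices $1,\dots,n-1$). Thus the minimal presentation matrix is block diagonal: one block is the $1\times(\text{something})$ row giving $R/L$, and the complementary block presents a module $H'$ on the generators (images of) $F_1,\dots,F_{n-1}$.

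From this block decomposition we immediately get $H \cong R/L \oplus H'$ as $R$-modules, and since Betti numbers are additive over direct sums, $\beta^R(H) = \beta^R(S/L) + \beta^R(H')$. That $H'$ is nonzero follows because $n \geq 2$, so there is at least one generator (image of $F_1$) in the second block, and it is not killed: if it were, that generator would have been removed as a non-minimal generator of $H$, contradicting that $xF_1$ is a minimal generator of $I$ (none of $F_1,\dots,F_n$ being divisible by $x$ and $K$ having no generator divisible by $x$).

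For the height statements: $\Ht L = \Ht (I:F_n) \geq \Ht I$ would be false in general, but $\Ht(I:F_n) \geq \Ht I - 1$ holds because passing from $I$ to a colon ideal by a single element can drop the height by at most \dots actually the cleaner route is: $H = (I:x)/I$ has $\Ann_R H \supseteq (I,x)/(x)$ up to the quotient, so every associated (equivalently minimal) prime of $H$ over $R$ contracts to a prime of $S$ containing $I$; localizing $H$ at a minimal prime $Q$ of $\Ann H$ of minimal height, $H_Q \neq 0$ forces $x \notin Q$-saturation issues, and one checks $\Ht(Q) \geq \Ht I - 1$ by comparing with the primes of $I$ not containing $x$ — here one uses that $I$ is a squarefree monomial ideal so it has no embedded primes, and that $\Ht I(x=1) \geq \Ht I$ would be too strong but $\Ht(I:x) \geq \Ht I - 1$ is standard. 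Since $\Ann H \subseteq \Ann H'$ and $\Ann H \subseteq \Ann(R/L) = L$ (as $R/L$ is a summand), we get $\Ht \Ann H' \geq \Ht \Ann H$ and $\Ht L \geq \Ht \Ann H \geq \Ht I - 1$.

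The main obstacle I anticipate is the height bookkeeping: the combinatorial identification $H \cong R/L \oplus H'$ via block-diagonalizing the Taylor/Koszul presentation is essentially forced once one observes that $F_iF_n \in I$ kills the relevant columns, but proving $\Ht \Ann H \geq \Ht I - 1$ cleanly — i.e., that colon-ing by $x$ and dividing drops height by at most one — requires a careful argument using that $I$ is squarefree monomial (no embedded primes) and that every minimal prime of $\Ann_R H$ corresponds to a monomial prime $P$ with $x \notin P$ such that $P$ (together with $x$) contains $I$; the height of such $P$ in $R$ is then at least $\Ht I - 1$ because $\Ht_S(P + (x)) \geq \Ht I = c$ and adjoining one variable raises height by exactly one. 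I would isolate this as the one lemma needing genuine care.
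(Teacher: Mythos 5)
Your proposal follows the same approach as the paper: invoke Theorem \ref{presentation matrix}, note that the Koszul syzygies $F_n e_i - F_i e_n$ become redundant once $F_iF_n \in I$ (each term already appears as a column of $N$), and conclude the minimal presentation is block-diagonal, yielding $H \cong R/L \oplus H'$ with additivity of Betti numbers. You helpfully supply two details the paper's proof leaves implicit --- why $H'$ is nonzero and why $\Ht \Ann_R H \geq \Ht I - 1$ --- though your height discussion takes a confused detour (the colon/saturation remarks, and ``row span'' where you mean column span) before arriving at the correct point, namely that $\Ann_R H$ contains the image of $I$ in $R$, whose height is at least $\Ht I - 1$.
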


\begin{proof}
Consider the presentation matrix $M$ in Theorem \ref{presentation matrix}.  $P$ will be the first syzygy matrix on the $F_i$, which we can take to be the first matrix in the Koszul complex on the $F_i$.  Those columns of $P$ whose last entry is nonzero are of the form $F_n e_i - F_i e_n$ for $i = 1, \ldots, n-1$ where $g_i = \gcd(F_i, F_n)$.  Since $F_n F_i \in I$, both terms of this sum are syzygies themselves and appear as columns of $N$, so these syzygies in $P$ are non-minimal and are not necessary.  We may assume the last row of $P$ is zero.  Since $N$ is a block diagonal matrix, this allows us to write $M$ as a block diagonal matrix, $M = (L | M')$ where $L$ is the bottom row of $N$ and $M'$ is the rest. Finally, notice that $\Ann H \subset \Ann H'$, so the result on height follows.
\end{proof}

\begin{Corollary}\label{presentation split betti number} Suppose that $I$ is a squarefree monomial ideal of height $c$ satisfying the condition in the previous Corollary.  Then for all $i$,
\begin{eqnarray*} 
\beta_i(S/I) &\geq &{ c \choose i} + { c-1 \choose i-1}.\end{eqnarray*}
Then $\beta(S/I) \geq  2^c + 2^{c-1}$ and equality holds only if $S/(I,x)$ is a complete intersection.
\end{Corollary}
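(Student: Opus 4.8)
The plan is to combine the pruning formula of Proposition~\ref{adam pruning} with the splitting of Corollary~\ref{presentation splits}. Let $x$ be the variable in the hypothesis, $J=(I,x)$, and $R=S/(x)$. Proposition~\ref{adam pruning} gives $\beta_i^S(S/I)=\beta_i^R(S/J)+\beta_{i-1}^R(H)$ with $H=(I:x)/I$, and since betti numbers are additive over direct sums, Corollary~\ref{presentation splits} rewrites this as
\[
\beta_i^S(S/I)=\beta_i^R(S/J)+\beta_{i-1}^R(R/L)+\beta_{i-1}^R(H').
\]
I would then bound each of the three terms from below by a binomial coefficient using the monomial Horrocks bounds (Proposition~\ref{Proposition Horrocks choose} and Remark~\ref{Remark Modules}). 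For the first: because no generator of $K$ is divisible by $x$ we have $(I,x)=(x,K)$, and $I\subseteq(x,K)$ forces $\Ht_S(x,K)\ge c$; since $x$ is a nonzerodivisor modulo $KS$ this gives $\Ht_S(x,K)=\Ht_R(KR)+1$, hence $\Ht_R(KR)\ge c-1$ and $\beta_i^R(S/J)=\beta_i^R(R/KR)\ge\binom{c-1}{i}$. For the other two, Corollary~\ref{presentation splits} supplies $\Ht L\ge c-1$ and $\Ht\Ann H'\ge c-1$ (and $L$ is proper since $F_n\notin I$, and $H'\ne 0$), so $\beta_{i-1}^R(R/L)\ge\binom{c-1}{i-1}$ and $\beta_{i-1}^R(H')\ge\binom{c-1}{i-1}$.

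Adding the three estimates and applying Pascal's identity $\binom{c-1}{i}+\binom{c-1}{i-1}=\binom{c}{i}$ gives
\[
\beta_i^S(S/I)\ge\binom{c-1}{i}+2\binom{c-1}{i-1}=\binom{c}{i}+\binom{c-1}{i-1},
\]
which is the stated bound on individual betti numbers; summing over $i$ yields $\beta(S/I)\ge 2^c+2^{c-1}$. Equivalently, summing the three pieces first gives $\beta(S/I)=\beta^R(S/J)+\beta^R(R/L)+\beta^R(H')$, and each summand is at least $2^{c-1}$ by the same height inputs together with the summed inequality $\beta\ge 2^{\Ht}$; this second form is the convenient one for the equality discussion.

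For the equality statement I would run the inequality in reverse. If $\beta(S/I)=2^c+2^{c-1}$ then each of $\beta^R(S/J)$, $\beta^R(R/L)$, $\beta^R(H')$ equals $2^{c-1}$. Looking at $S/J=R/KR$: from $2^{c-1}=\beta^R(R/KR)\ge 2^{\Ht KR}\ge 2^{c-1}$ we get $\Ht KR=c-1$ and $\beta^R(R/KR)=2^{\Ht KR}$. It then suffices to know that a squarefree monomial ideal whose quotient has total betti number $2^{\mathrm{height}}$ is a complete intersection; since $R$ has strictly fewer variables than $S$, this is available from the contrapositive of Theorem~\ref{MainTheorem} applied in $R$ under an induction on the number of variables (a non-CI monomial ideal of height $h$ has $\beta\ge 2^h+2^{h-1}>2^h$). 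Hence $S/(I,x)=R/KR$ is a complete intersection.

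The two height computations and the binomial arithmetic are routine. The part requiring the most care is the equality case: one must check that the modules $R/L$ and $H'$ are genuinely multigraded with annihilators of height at least $c-1$ so that Remark~\ref{Remark Modules} applies, and one must have at hand the implication ``$\beta=2^{\mathrm{height}}\Rightarrow$ complete intersection'' for monomial ideals, which is exactly why this Corollary is used inside the induction on the number of variables underlying Theorem~\ref{MainTheorem} rather than on its own.
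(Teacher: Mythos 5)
Your argument for the main inequality is correct and is the same as the paper's: combine Proposition~\ref{adam pruning} with the direct-sum decomposition of Corollary~\ref{presentation splits}, bound each of the three pieces by the monomial Horrocks bound (Proposition~\ref{Proposition Horrocks choose} and Remark~\ref{Remark Modules}) using the height estimates, and apply Pascal's identity. The side computation $(I,x)=(K,x)$ and the consequent $\Ht_R KR\ge c-1$, as well as the observations that $L$ is proper (because $xF_n$ minimal forces $F_n\notin I$) and $H'\ne 0$, are all as in the paper and are needed to make the $\binom{c-1}{i-1}$ terms legitimate at $i-1=0$.

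The equality discussion is where your route diverges, and it has a real problem. You deduce $\beta^R(R/KR)=2^{c-1}=2^{\Ht KR}$ and then want to conclude $R/KR$ is a CI by invoking Theorem~\ref{MainTheorem} in $R$. But this Corollary is a step \emph{toward} Theorem~\ref{MainTheorem} (it feeds Propositions~\ref{The NonlinearCase} and \ref{simple case linear resolution}, which prove Theorem~\ref{ThmNCI}, which proves Theorem~\ref{MainTheorem}), and the paper does not set up the main theorem as an induction on the number of variables, so your appeal to ``the induction underlying Theorem~\ref{MainTheorem}'' is not actually available. Fortunately, no such heavy machinery is needed: the paper's argument is that if $S/(I,x)$ is not a CI then $\beta_1^R(S/(I,x))>c-1$. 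Indeed, if $\Ht_R((I,x)/(x))>c-1$ this is immediate from Krull height, and if the height equals $c-1$ then a monomial ideal of height $c-1$ generated by exactly $c-1$ elements is automatically a CI (grade equals height in a polynomial ring and the Koszul complex is then exact), so a non-CI must have $\beta_1>c-1$. This makes the $i=1$ inequality strict, hence the sum is strictly bigger than $2^c+2^{c-1}$, with no reference to the main theorem. You should replace your last paragraph with this direct argument.
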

\begin{proof}
By Proposition \ref{adam pruning}, Corollary \ref{presentation split betti number}, Proposition \ref{Proposition Horrocks choose}, and Remark \ref{Remark Modules} we have that
\begin{eqnarray*} 
\beta_i(S/I) &= &\beta^{R}_i(S/(I,x)) + \beta^{R}_{i-1}(S/L) + \beta^R_{i-1}(H')  \\
		  & \geq & {c-1 \choose i}  + { c-1 \choose {i-1}}  + { c-1 \choose {i-1}} \\
		  & =  & { c \choose i} + { c-1 \choose i-1}. \\
\beta(S/I)    &  \geq & 2^c + 2^{c-1}. \end{eqnarray*}
We obtain the inequality because the modules appearing on the right are of height at least $c-1$.   The assertion on $\beta(S/I)$ follows by taking sums.  If $S/(I,x)$ is not a CI then the inequality will be strict as then $\beta_1^R(S/(I,x)) > c-1$.  
\end{proof}

\begin{Remark}\label{prune is sharp} The ideal $I = (xy,xz,yz,u_1, u_2, \ldots, u_{c-2})$ illustrates that these inequalities are sharp.
\end{Remark}


\section{Properties of NCI ideals}\label{Section NCI}

\begin{Lemma}\label{Lemma NCI}Suppose that $I$ is NCI.  If $m_1$ and $m_2$ are two minimal monomial generators of $I$ then their gcd has degree at most $1$.
\end{Lemma}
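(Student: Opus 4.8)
The plan is to argue by contradiction: suppose $I$ is NCI and that two minimal generators $m_1, m_2$ share at least two variables, say $x$ and $y$ both divide $\gcd(m_1,m_2)$. I want to produce a variable $z$ in the support of $I$ such that $I(z=1)$ is not a complete intersection, contradicting the NCI hypothesis. The natural candidate is to localize away from the ``overlap''. First I would record the easy structural consequences of the NCI definition that I can lean on: $I$ is generated in degree $\geq 2$, $I$ is not a CI, and for \emph{every} variable $z$ in the support, $I(z=1)$ \emph{is} a CI. In particular, after inverting any single variable the resulting ideal is generated by a regular sequence, so its minimal generators are, up to the reordering induced by setting that variable to $1$, as ``spread out'' as possible — no two of them can share a variable once we are in a CI, since a monomial complete intersection is generated by monomials in pairwise disjoint sets of variables (indeed a set of monomials forms a regular sequence in a polynomial ring iff their supports are pairwise disjoint).

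The key step is the following observation about how $\gcd$'s behave under $I(z=1)$: if $m_1$ and $m_2$ are minimal generators of $I$ and $z$ does not divide $\gcd(m_1,m_2)$, then setting $z=1$ does not merge $m_1$ and $m_2$ into a single generator and does not remove a common variable other than possibly making one of them redundant; more precisely, $\gcd(m_1(z=1), m_2(z=1))$ still has degree $\geq 1$ whenever $\deg \gcd(m_1,m_2)\geq 2$ and $z$ divides at most one of the variables in that gcd — and if $z$ divides neither, the gcd degree is unchanged. So: pick $z$ to be a variable dividing $m_1 m_2$ but \emph{not} dividing $\gcd(m_1,m_2)$ — such a $z$ exists provided $m_1 \neq m_2$ have degree $\geq 2$ and strictly contain their gcd, which holds since they are distinct minimal generators (neither divides the other). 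After setting $z=1$, the images of $m_1$ and $m_2$ are still distinct monomials (we removed $z$ from only one of them), neither is $1$ (degrees were $\geq 2$ and we dropped at most one variable, but we must be slightly careful if $m_i = z\cdot(\text{the gcd})$ has the gcd of degree exactly... — handle this by instead choosing $z$ dividing the \emph{larger} complement), and they still share the two variables $x,y$. Hence $I(z=1)$ contains two distinct minimal-or-not monomials with a common factor of degree $\geq 2$; passing to a minimal generating set, either both survive (and then $I(z=1)$ cannot be a monomial CI, since two of its generators overlap), or one becomes redundant — and I would rule the redundant case out, or absorb it, by noting that if $m_2(z=1)$ is a multiple of $m_1(z=1)$ then already $m_2$ is ``almost'' a multiple of $m_1$ in $I$, and a short separate argument (or a further localization) again forces non-CI. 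Either way $I(z=1)$ is not a complete intersection, contradicting NCI.

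I expect the main obstacle to be the bookkeeping in the degenerate cases: when one of the generators is exactly $z$ times the gcd, or when setting $z=1$ makes one generator divide another so that the ``two overlapping generators'' collapse to one in the minimal generating set of $I(z=1)$. The clean way around this is probably to choose $z$ more carefully — take $z$ to be any variable in $\Supp(m_1)\setminus \Supp(m_2)$ if that set is nonempty, and symmetrically otherwise; if \emph{both} those difference-sets are empty then $\Supp(m_1)=\Supp(m_2)$ and, since the ideal is squarefree by Remark \ref{polarization}, that forces $m_1 = m_2$, a contradiction. With $z\in\Supp(m_1)\setminus\Supp(m_2)$, setting $z=1$ leaves $m_2$ untouched and shortens $m_1$ by exactly one variable, keeping it of degree $\geq 1$; if its degree drops to $1$ then $I$ had a generator $m_1 = zm$ with $m$ a single variable dividing $m_2$, and then $\deg\gcd(m_1,m_2)\geq 1$ is fine but to get the degree-$2$ overlap we use that $x,y \mid m_2$ and at least one of $x,y$, say $x$, is $\neq z$ hence still divides $m_1(z=1)$ — so we may instead have chosen the second shared variable's role carefully from the start. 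Once the right $z$ is fixed, the contradiction is immediate from the fact that a squarefree monomial complete intersection has generators with pairwise disjoint supports, so I would state that fact as a one-line sub-claim and finish.
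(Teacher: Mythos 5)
Your overall strategy is right — use the NCI hypothesis to find a variable $z$ with $I(z=1)$ not a CI, using the fact that a squarefree monomial CI has generators with pairwise disjoint supports — but you choose $z$ \emph{outside} the gcd, and that choice creates a redundancy problem you acknowledge but never close. Concretely, if you pick $z\in\Supp(m_1)\setminus\Supp(m_2)$, it is entirely possible that $m_1/z$ divides $m_2$ (e.g.\ $m_1=xyz$, $m_2=xyw$, $z=z$), in which case $m_2$ drops out of the minimal generating set of $I(z=1)$ and you no longer have two overlapping minimal generators to contradict the CI property; indeed in that toy example $I(z=1)=(xy)$ is a CI. Your text flags this ("I would rule the redundant case out, or absorb it \dots a short separate argument") but the separate argument is never given, and it is not obvious how to supply it along your chosen route.

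The paper avoids this entirely by localizing at a variable $x$ \emph{inside} the gcd: it considers $I(x=1)$ and observes that $m_1/x$ and $m_2/x$ are still minimal generators sharing the factor $y$. The key point your route misses is that for squarefree monomials both divisible by $x$, one has $m_1/x \mid m_2/x \iff m_1 \mid m_2$, and more generally $m_3(x=1)\mid m_i/x \implies m_3\mid m_i$; so minimality of $m_1,m_2$ in $I$ automatically forces minimality of $m_1/x,m_2/x$ in $I(x=1)$, with no case analysis. That is exactly the collapse your choice of $z$ cannot rule out. Your argument is salvageable in principle (one can iterate localizations or fall back to a gcd variable when redundancy occurs), but as written it has a genuine gap; switching to $x\in\Supp(\gcd(m_1,m_2))$ closes it in one line.
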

\begin{proof}
If $x$ and $y$ are distinct variables that divide $m_1$ and $m_2$ then $I(x=1)$ is not a CI since $m_1/x$ and $m_2/x$ are minimal generators with a common factor.
\end{proof}

\begin{Lemma}\label{everything has a gcd}  Suppose that $I$ is NCI and $F$ is a minimal generator of $I$.  Then $F$ must have a factor in common with some other generator.
\end{Lemma}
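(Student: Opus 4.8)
The plan is to argue by contradiction, using the hypothesis $I(x=1)$ is a complete intersection for a variable $x$ dividing $F$. So suppose $F$ is a minimal generator of $I$ that shares no variable with any other minimal generator. Write $I = (F) + K$, where $K$ is generated by the remaining minimal generators $m_1, \ldots, m_k$; our assumption says $\gcd(F, m_j) = 1$ for every $j$. Since $I$ is not a complete intersection it is not principal, so $k \geq 1$.

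Next I would exploit that $I$ is generated in degree at least two, so $F$ is divisible by some variable $x$ and $F/x$ still has positive degree. None of the $m_j$ is divisible by $x$ (as $x \mid F$ and $\gcd(F,m_j)=1$), so setting $x=1$ leaves each $m_j$ unchanged; hence $I(x=1) = (F/x, m_1, \ldots, m_k)$. I would then check that no monomial in this list divides another: $m_i \mid m_j$ for $i \neq j$ would contradict minimality of the generators of $I$, while $F/x \mid m_j$ or $m_j \mid F/x$ would force $\gcd(F/x,m_j)$ to equal $F/x$ or $m_j$, impossible since $\gcd(F/x,m_j)$ divides $\gcd(F,m_j)=1$ and both monomials have positive degree. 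Thus $\{F/x, m_1, \ldots, m_k\}$ is a minimal generating set of $I(x=1)$.

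Now the NCI hypothesis gives that $I(x=1)$ is a complete intersection, so its minimal generators form a regular sequence; being monomials, they must then have pairwise disjoint supports. In particular the $m_j$ have pairwise disjoint supports, and combining this with $\gcd(F,m_j)=1$ for all $j$ shows that $F, m_1, \ldots, m_k$ have pairwise disjoint supports. Hence they form a regular sequence, so $I$ is a complete intersection, contradicting the definition of NCI.

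The argument is short; the one place that needs care is verifying that after setting $x=1$ the generating set of $I(x=1)$ is still minimal, since it is minimality that lets ``complete intersection'' pin down the supports of the $m_j$. This is exactly where the coprimality assumption on $F$ is used twice: once so that the $m_j$ survive the substitution unchanged, and once so that $F/x$ does not become redundant. Everything else is the standard fact that a regular sequence of monomials has pairwise disjoint supports.
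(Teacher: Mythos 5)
Your proof is correct. Both you and the paper argue by contradiction by setting $x=1$ for a variable $x$ dividing $F$, and both exploit that $I(x=1)$ must be a complete intersection; the difference is in how the contradiction is extracted. The paper starts from the fact that $I$ is not a CI, so some pair $M_1, M_2$ of minimal generators share a variable (neither can be $F$ under the contradiction hypothesis), and then observes that since the only generator changed by the substitution is $F \mapsto F/x$, and $F/x$ retains a variable $y$ foreign to $M_1, M_2$, both $M_1$ and $M_2$ survive as minimal generators of $I(x=1)$, which is therefore not a CI --- contradicting NCI. You instead run the argument in the reverse direction: you verify that the full list $\{F/x, m_1,\ldots,m_k\}$ remains a minimal monomial generating set (the extra care about $F/x$ not dividing or being divided by any $m_j$ is exactly the right thing to check), invoke the CI hypothesis on $I(x=1)$ to conclude the $m_j$ have pairwise disjoint supports, and combine this with $\gcd(F,m_j)=1$ to conclude $I$ itself is a CI, again contradicting NCI. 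Your route requires slightly more bookkeeping (establishing minimality of the whole set rather than just two generators), but in exchange it localizes at a single variable of $F$ and avoids having to invoke a second variable $y$ of $F$; both uses of $\deg F \geq 2$ are ultimately the same fact. Either argument is complete and correct.
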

\begin{proof}
Since $I$ is not a CI there are two minimal generators $M_1,M_2$ that have a factor in common. Since $F$ is a monomial of degree at least two, let $x$ and $y$ be two variables that divide $F$, and assume that $x,y$ do not appear in any other minimal generator.  Then the generators of $I(x=1)$ are the same as those of $I$ except that $F$ is replaced with $F/x$.  $M_1$ and $M_2$ will still be minimal generators, since they are not divisible by $(F/x)$ which has $y$ as a factor.  
\end{proof}

\noindent \textbf{Notation $(\star)$:}  For the remainder of this section we will assume that $I$ is NCI of height $c$ and assume that each associated prime of $I$ has height $c$.  If $x$ is in the support of $I$, and $xF_1, \ldots, xF_n$ are those generators of $I$ divisible by $x$ ($n \geq 2$) then we may write 
$$I = x(F_1, \ldots, F_n) + J  + K$$ 
where $J$ consists of those remaining generators in the ideal generated by the $F_i$ and $K$ is the ideal generated by the remaining generators (if any).  
Such a decomposition exists for any variable $x$.  Notice that $I(x=1) = (F_1, \ldots, F_n) + K$, which must be a CI.  It must necessarily be of height $c$, since there is a minimal prime of $I$ that does not contain $x$ (since $x\notin I$).  This implies that $K$ has to be a complete intersection of height $c-n$ and that the variables appearing in $K$ are distinct from those appearing in the $F_i$.  We will use this notation throughout this section.

\begin{Lemma}
In the above notation, at most one of $F_1, \ldots, F_n$ has degree greater than 1.   If $F_n$ has degree greater than $1$ then $J\subset (F_1, \ldots, F_{n-1})$.
\end{Lemma}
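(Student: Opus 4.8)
The plan is to extract one simple consequence of Lemma~\ref{Lemma NCI} and let it do almost all of the work. Call it $(\ast)$: \emph{if $G$ is a minimal generator of $I$ not divisible by $x$ and $F_i\mid G$ for some $i$, then $\deg F_i=1$.} To see this, observe that $G$ and $xF_i$ are distinct minimal generators of $I$ (distinct because $x\mid xF_i$ but $x\nmid G$), so $\deg\gcd(G,xF_i)\le 1$ by Lemma~\ref{Lemma NCI}; since $F_i$ divides both $G$ and $xF_i$ it divides their gcd, whence $\deg F_i\le 1$, while $\deg F_i\ge 1$ because $xF_i$ is a minimal generator of $I$ and $I$ is generated in degrees $\ge 2$. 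Granting $(\ast)$, the second assertion of the lemma is immediate: any minimal generator $G$ of $J$ lies in $(F_1,\dots,F_n)$, hence is divisible by some $F_i$, and $(\ast)$ forces $\deg F_i=1$; so if $\deg F_n\ge 2$ then $i\ne n$ and $G\in(F_1,\dots,F_{n-1})$, whence $J\subseteq(F_1,\dots,F_{n-1})$.

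For the first assertion I would argue by contradiction. Suppose two of the $F_i$, say $F_1$ and $F_2$ after relabelling, have degree $\ge 2$; recall from the discussion following Notation $(\star)$ that $\operatorname{supp}(F_1),\dots,\operatorname{supp}(F_n)$ are pairwise disjoint and disjoint from $x$ and from the variables occurring in $K$. Fix a variable $y\mid F_1$. Since $I$ is NCI, $I(y=1)$ is a monomial complete intersection, so each variable divides at most one of its minimal generators. The goal is to show that $x(F_1/y)$ (the image of $xF_1$) and $xF_2$ (unchanged, since $y\nmid F_2$) are \emph{both} minimal generators of $I(y=1)$: they are distinct monomials — $x(F_1/y)=xF_2$ would force $F_1/y=F_2$, impossible as $\operatorname{supp}(F_1)$ and $\operatorname{supp}(F_2)$ are disjoint and nonempty — and both are divisible by $x$, which is the desired contradiction.

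So the real content is checking minimality of those two monomials after setting $y=1$. For each, I would verify that no variable $v$ dividing it can be cancelled, i.e., that the resulting proper divisor is not in $I(y=1)$: if it were, some minimal generator $m$ of $I$ would have image under $y\mapsto 1$ dividing that proper divisor, which pins down $\operatorname{supp}(m)\subseteq\{x,y\}\cup\operatorname{supp}(F_1)$ in the first case and $\operatorname{supp}(m)\subseteq\{x,y\}\cup\operatorname{supp}(F_2)$ in the second. One then runs through the exhaustive trichotomy $m=xF_i$, $m\in g(J)$, $m\in g(K)$ and derives a contradiction in every branch, using (a) pairwise disjointness of the $\operatorname{supp}(F_i)$ and their disjointness from $x$ and from the variables of $K$, (b) observation $(\ast)$ to dispatch the case $m\in g(J)$ with $m$ divisible by $F_1$ or $F_2$, and (c) the hypotheses $\deg F_1,\deg F_2\ge 2$, which rule out $F_i=y$. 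Once this is done, the contradiction with $I(y=1)$ being a complete intersection closes the argument. I expect this support-tracking case analysis — making sure the trichotomy on $\operatorname{supp}(m)$ is genuinely exhaustive — to be the only real obstacle; the conceptual ingredients are merely $(\ast)$ and the fact that a monomial complete intersection has minimal generators with pairwise disjoint supports.
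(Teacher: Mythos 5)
Your proof is correct, and your observation $(\ast)$ is exactly the crux: it is equivalent to the way the paper invokes Lemma~\ref{Lemma NCI}, and your deduction of the second assertion from it matches the paper's. For the first assertion, however, you take a genuinely different route. You argue by contradiction via a second localization: assuming $\deg F_1,\deg F_2\ge 2$, you pick $y\mid F_1$ and show that $x(F_1/y)$ and $xF_2$ are both minimal generators of the complete intersection $I(y=1)$, both divisible by $x$, contradicting pairwise coprimality of CI generators. The support-tracking case analysis you sketch on $m\in\{xF_i\}\cup g(J)\cup g(K)$ does in fact close in every branch (it uses pairwise disjointness of $\operatorname{supp}(F_1),\dots,\operatorname{supp}(F_n)$ from each other and from $x$ and the $K$-variables, plus $(\ast)$), but it is delicate and you leave it at an outline. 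The paper instead goes through heights: $\Ht(J+K)\ge c-1$ (because a minimal prime of $J+K$ together with $x$ contains $I$) and $\Ht K=c-n$, with $J$ and $K$ supported on disjoint variables, give $\Ht J\ge n-1$; since $J\subset(F_1,\dots,F_n)$ and the $F_i$ form a regular sequence, $J$ cannot lie inside the ideal generated by only $n-2$ of the $F_i$, so at least $n-1$ of the $F_i$ divide some minimal generator of $J$, and then Lemma~\ref{Lemma NCI} applied to that generator and $xF_i$ forces $\deg F_i=1$. The height route is shorter, avoids the support chase entirely, and its auxiliary observations are reused later in the paper; yours is more elementary and self-contained, working directly from the NCI definition with no height computation, at the cost of a longer combinatorial verification.
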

\begin{proof}
Notice that 
\begin{itemize}
\item $\Ht (J+ K) \geq c-1$ since if $P$ is a minimal prime of $J+K$, then $(P,x)$ will be a prime containing $I$.
\item $\Ht K = c-n$ as discussed above\end{itemize}
We conclude the height of $J$ is at least $n-1$.  Therefore, $J$ is not contained in an ideal generated by only $n-2$ of the $F_i$.  Thus without loss of generality, $J$ contains minimal generators $F_iG_i$ for $i = 1, \ldots, n-1$.   Each generator has $\gcd(F_iG_i,xF_i) = F_i$, and thus by Lemma \ref{Lemma NCI} $F_i$ has degree one.  The final claim follows since if $J$ had a minimal generator $G$ divisible by $F_n$ then $\gcd(G, xF_n)$ would have degree greater than $1$, a contradiction. 
\end{proof}

For $i = 1, \ldots, n-1$ we now denote $F_i$ by $a_i$ to stress it is a variable.  Thus we refine Notation $(\star)$
\begin{equation}\label{The format} I = x(a_1, \ldots, a_{n-1}, F_n) + J + (h_1K_1, \ldots, h_{c-n}K_{c-n})\end{equation}
where $(a_1, \ldots, a_{n-1}, F_n, h_iK_i)$ is a regular sequence and $J\subset (a_1, \ldots, a_{n-1},F_n)$.   

\begin{Proposition}\label{The NonlinearCase}
Suppose that $I$ is an NCI of height $c$ with a minimal generator of degree at least three.  Then $\beta(S/I) > 2^c + 2^{c-1}$.  In addition $\beta_i(S/I) \geq  {c \choose i} + { c-1 \choose i-1}$ for all $i$. 
\end{Proposition}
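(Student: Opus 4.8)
The plan is to exploit the format~\eqref{The format} together with Corollary~\ref{presentation split betti number}. Fix a variable $x$ in the support of $I$ and write $I = x(a_1,\ldots,a_{n-1},F_n) + J + K$ as in~\eqref{The format}, where the $a_i$ are variables, $F_n$ has degree $\ge 2$, $J \subset (a_1,\ldots,a_{n-1},F_n)$, and $K = (h_1K_1,\ldots,h_{c-n}K_{c-n})$ is a CI of height $c-n$ on variables disjoint from those in the $F_i$. First I would check that the hypotheses of Corollary~\ref{presentation split betti number} (equivalently Corollary~\ref{presentation splits}) are met: that $F_1,\ldots,F_n$ form a regular sequence over $R = S/(x)$ — this is part of the format, since $(a_1,\ldots,a_{n-1},F_n,h_iK_i)$ is a regular sequence — and crucially that $F_iF_n \in I$ for $i=1,\ldots,n-1$, i.e.\ $a_iF_n \in I$. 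This last point is where the degree hypothesis enters: since $F_n$ has degree $\ge 2$ and $J \subset (a_1,\ldots,a_{n-1},F_n)$, I expect $J$ to contain a generator of the form $a_iG_i$ for each $i$ (as in the Lemma preceding~\eqref{The format}), and by Lemma~\ref{Lemma NCI} $\gcd(a_iG_i, xF_n)$ has degree $\le 1$, forcing $G_i$ to be a multiple of $F_n$ up to the structure already recorded; more directly, one argues $a_iF_n$ lies in $(I:F_n)\cdot F_n \subseteq I$. So the bulk of the work is verifying $a_iF_n\in I$ for each $i$, using that $\Ht J \ge n-1$ so $J$ is not contained in $n-2$ of the $a_i$, hence (after reindexing) picks up a generator involving each $a_i$.

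Once the hypotheses of Corollary~\ref{presentation split betti number} are in place, that Corollary immediately gives $\beta_i(S/I) \ge \binom{c}{i} + \binom{c-1}{i-1}$ for all $i$, hence $\beta(S/I) \ge 2^c + 2^{c-1}$, and equality forces $S/(I,x)$ to be a complete intersection over $R$. So to get the \emph{strict} inequality it suffices to show $S/(I,x)$ is \emph{not} a CI. Here I would use the degree-$\ge 3$ generator: write $F_n$ (or whichever generator has degree $\ge 3$) and observe that $(I,x)$ contains both $x$ and a generator that, modulo $x$, has degree $\ge 3$ while also overlapping (in the $\gcd$ sense made precise by Lemma~\ref{Lemma NCI} applied within the NCI structure) with another generator — so $(I,x)/(x)$ is not generated by a regular sequence. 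A cleaner route: if the degree-$\ge 3$ generator is in the support-block of $x$, then $S/(I,x)$ contains the non-CI configuration forced by $J$ together with the high-degree $F_n$; if instead the degree-$\ge 3$ generator $M$ is not divisible by $x$, choose $x$ to be a variable dividing $M$ in the first place — every variable in the support of $I$ works as the pruning variable, by the NCI hypothesis — and then $M/x$ still has degree $\ge 2$ and appears in $(I:x)/I$ or $(I,x)$ in a way that obstructs the CI property. Either way, $\beta_1^R(S/(I,x)) > c-1$ or the minimal generators of $(I,x)/(x)$ fail to be a regular sequence, giving strictness.

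The main obstacle I anticipate is the bookkeeping around \emph{which} variable $x$ to prune at and ensuring simultaneously that (a) the format~\eqref{The format} with $F_n$ of degree $\ge 2$ is available at that $x$, and (b) $S/(I,x)$ genuinely fails to be a CI — these two desiderata pull in slightly different directions, since the first wants $x$ attached to the high-degree generator and the second wants the high-degree behavior to survive in the quotient. I would resolve this by a case split: either some variable $x$ in the support of a degree-$\ge 3$ generator has $\ge 2$ generators through it (the generic case, handled by the above), or the degree-$\ge 3$ generator $M$ shares each of its variables with at most its own orbit — but then $M$ has a variable $y$ appearing in no other generator, and $I(y=1)$ replaces $M$ by $M/y$ of degree $\ge 2$ while keeping the overlap witnessed by Lemma~\ref{everything has a gcd}, so $I(y=1)$ is not a CI, contradicting NCI. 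Thus the hard case cannot occur, and the argument reduces to the generic one. The remaining steps — applying Corollary~\ref{presentation split betti number} and reading off both the individual-betti bound and strictness — are then formal.
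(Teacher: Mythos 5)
You correctly identify the right framework: write $I$ in the form~\eqref{The format} with $\deg F_n\ge 2$ (using Lemma~\ref{everything has a gcd} to place $x$ on the high-degree generator), aim to verify $a_iF_n\in I$ for $i<n$, and then invoke Corollary~\ref{presentation split betti number}. This matches the paper's plan. But your argument for the crucial containment $a_iF_n\in I$ does not work. You write that ``one argues $a_iF_n$ lies in $(I:F_n)\cdot F_n\subseteq I$,'' but that requires $a_i\in I:F_n$, which is exactly what you need to prove --- the reasoning is circular. Your other suggestion, that since $J$ contains minimal generators $a_iG_i$, the constraint $\gcd(a_iG_i, xF_n)\le 1$ (degree one) from Lemma~\ref{Lemma NCI} ``forces $G_i$ to be a multiple of $F_n$,'' is false: that gcd bound only says $G_i$ and $F_n$ share at most one variable, which is the opposite of making $G_i$ a multiple of $F_n$. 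So the key step is genuinely missing.

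The paper's argument for this step is short but uses an idea you don't reach: since $\deg F_n\ge 2$, pick two distinct variables $y,z$ dividing $F_n$, and examine $I(y=1)$, which is a CI because $I$ is NCI. The generators $xa_1,\dots,xa_{n-1},xzF_0$ all land in $I(y=1)$ and pairwise share $x$; since minimal monomial generators of a CI are coprime, at most one can survive as a minimal generator. One shows $xzF_0$ must survive (Lemma~\ref{Lemma NCI} rules out anything dividing it), hence each $xa_i$ becomes non-minimal in $I(y=1)$, which forces $ya_i\in I$. The same with $z$ gives $za_i\in I$, hence $a_iF_n\in I$. This also delivers strictness for free: $ya_1$ and $za_1$ both lie in $I$ and are $x$-free, so $(I,x)$ has two generators sharing the factor $a_1$ and cannot be a CI, making the Corollary~\ref{presentation split betti number} inequality strict. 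Your strictness discussion and final case split about a variable ``appearing in no other generator'' gesture at related points but don't actually establish either $a_iF_n\in I$ or that $(I,x)$ fails to be a CI, so the core of the proposition remains unproven.
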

\begin{proof} First, if $I$ has an associated prime of height greater that $c$ then by Proposition \ref{Proposition Horrocks choose}, we would have that $\beta(S/I) \geq 2^{c+1}$, which is larger than $2^c + 2^{c-1}$.   Hence we will assume that $I$ is of the form in Notation $(\star)$.

By Lemma \ref{everything has a gcd}, the generator of degree at least three will have a variable $x$ in common with at least one other generator.  Hence we may assume 
$$I = x(a_1, \ldots, a_{n-1}, F_n) + J + (h_1K_1, \ldots, h_{c-n}K_{c-n})$$
as above, where $n \geq 2$ and $\deg F_n \geq 2$.  We will show that $a_iF_n \in I$ for $i = 1, \ldots, n-1$ and then the result will follow from Corollary \ref{presentation split betti number}.

Since $\deg F_n \geq 2$, there are two distinct variables $y,z$ so that $F_n = yzF_0$.  Note that $y,z \neq a_i$ since that would imply $xyzF_0$ is not a minimal generator.  Let us examine $I(y=1)$.  
$$I(y=1) = (xa_1, \ldots, xa_{n-1}, xzF_0) + J(y=1) + (h_1K_1, \ldots, h_{c-n}K_{c-n})(y=1).$$
This must be a complete intersection, so at most one of $xa_1, \ldots, xa_{n-1},xzF_0$ can be a minimal generator of $I(y=1)$.   But $xzF_0$ must be a minimal generator, so we have that the $xa_i$ are not minimal generators of $I(y=1)$ and this means $ya_i\in I$ for all $i$.  Thus $a_iF_n \in I$ as required.  The same argument shows that $za_i \in I$ for all $i$ as well.  We are thus able to apply Corollary \ref{presentation split betti number} and the results follow. Note that $(I,x)$ is not a CI as $ya_1, za_1 \in I$.
\end{proof}

All that remains is the case that $I$ is generated in degree two:
$$I = x(a_1, \ldots, a_n) + J + (h_1k_1, \ldots, h_{c-n}k_{c-n}).$$
Our proof proceeds in cases:

\begin{Proposition}\label{Height c Case} Suppose $I$ is a squarefree monomial ideal of height $c\geq 2$ of the form 
$$I = x(a_1, \ldots, a_c) + J$$
where $J\subset (a_1, \ldots, a_c)$.
Then 
$$\beta(S/I) \geq 2^{c+1} - 2 \geq 2^c + 2^{c-1}.$$  The second inequality is strict when $c \geq 3$. More specifically we have: 
$$\beta_1(S/I) \geq 2c-1, \ \ \beta_i(S/I) \geq 2{c\choose i} \  \mbox{ for $i\geq 2$}.$$

\end{Proposition}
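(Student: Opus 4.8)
The plan is to combine the betti splitting of Proposition~\ref{betti splitting} with the Horrocks-type bound of Proposition~\ref{Proposition Horrocks choose}, both applied to the auxiliary ideal $J$. Throughout I will use the structure coming from Notation~$(\star)$: the $a_i$ are distinct variables, none equal to $x$, no minimal generator of $J$ is divisible by $x$, and $J$ is a squarefree monomial ideal with $J\subseteq\fa:=(a_1,\ldots,a_c)$.

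The first step is a height estimate: \emph{$\Ht_S J\geq c-1$} (in particular $J\neq 0$). Indeed, if $Q$ were a minimal prime of $J$ with $\Ht Q\leq c-2$, then $Q$ is a monomial prime not involving $x$ (since $x$ occurs in no generator of $J$), so $(Q,x)$ is a prime of height $\leq c-1$ containing $x\fa+J=I$, contradicting $\Ht I=c$. Since the minimal primes of $J$ are among its associated primes, $J$ has an associated prime of height $\Ht J\geq c-1$, so Proposition~\ref{Proposition Horrocks choose} gives $\beta_i(S/J)\geq \binom{c-1}{i}$ for all $i$; taking $i=1$ also gives that the number of minimal generators of $J$, namely $\beta_1(S/J)$, is at least $c-1$.

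The second step is the betti splitting. I would apply Proposition~\ref{betti splitting} to $I=x\fa+J$, with the roles of its ``$J$'' and ``$K$'' played by $\fa$ and $J$: this is legitimate because $\fa$ is generated by variables, hence resolved by a (linear) Koszul complex, and no generator of $J$ is divisible by $x$. Since $J\subseteq\fa$ we have $\fa\cap J=J$, and since $S/\fa$ is resolved by the Koszul complex, $\beta_j(\fa)=\binom{c}{j+1}$; thus $\beta_i(I)=\binom{c}{i+1}+\beta_i(J)+\beta_{i-1}(J)$. Translating to quotient rings via $\beta_i(S/I)=\beta_{i-1}(I)$ and $\beta_j(S/J)=\beta_{j-1}(J)$ for $j\geq 1$, this becomes $\beta_1(S/I)=c+\beta_1(S/J)$ and, for $i\geq 2$, $\beta_i(S/I)=\binom{c}{i}+\beta_i(S/J)+\beta_{i-1}(S/J)$.

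Finally I would substitute the bounds from step one: for $i\geq 2$, $\beta_i(S/I)\geq \binom{c}{i}+\binom{c-1}{i}+\binom{c-1}{i-1}=2\binom{c}{i}$, while $\beta_1(S/I)\geq c+(c-1)=2c-1$. Summing over $i$ (using $\beta_0(S/I)=1$ and $\sum_{i=2}^{c}\binom{c}{i}=2^{c}-1-c$) yields $\beta(S/I)\geq 2^{c+1}-2$, and the inequality $2^{c+1}-2\geq 2^{c}+2^{c-1}$ reduces to $2^{c-1}\geq 2$, which holds for every $c\geq 2$ and is strict for $c\geq 3$. I do not expect a genuine conceptual obstacle here; the step requiring the most care is the betti splitting bookkeeping — verifying its hypotheses hold verbatim for this decomposition, identifying the intersection term as $J$ itself, and keeping straight the index shift between $\beta_i(\mathrm{ideal})$ and $\beta_i(S/\mathrm{ideal})$, which is precisely why the bound on $\beta_1$ comes out as $2c-1$ rather than $2c$.
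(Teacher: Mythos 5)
Your proof is correct and follows the same route as the paper's: the betti splitting of Proposition~\ref{betti splitting} applied to $I = x(a_1,\ldots,a_c)+J$ together with the Horrocks-type bound $\beta_i(S/J)\geq\binom{c-1}{i}$ coming from $\Ht J \geq c-1$. The only cosmetic differences are that you spell out the height estimate the paper merely asserts, and you record the intersection term as $\fa\cap J = J$ where the paper writes $x\fa\cap J = xJ$; these give identical Betti numbers.
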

\begin{proof}
Notice that $\Ht J \geq c-1$ and that $x(a_1, \ldots, a_c)\cap J = xJ$. We have
$$\beta_1(S/I) = c + \beta_1(S/J) \geq c + (c-1) = 2c-1,$$
and for $i \geq 2$, by Theorem \ref{betti splitting}
\begin{eqnarray*}
\beta_i(S/I) & =  & \beta_{i}(S/x(a_1, \ldots, a_c)) + \beta_{i}(S/J) + \beta_{i-1}(S/(x(a_1, \ldots, a_c)\cap J))\\
 & = & {c \choose {i}} + \beta_{i}(S/J) + \beta_{i-1}(S/xJ) \\
 & = & {c \choose {i}} + \beta_{i}(S/J) + \beta_{i-1}(S/J) \\ 
 & \geq & {c \choose {i}} + {{c-1} \choose i} + {{c-1} \choose i-1}  = 2{ c \choose i}.  \end{eqnarray*}
 Summing, we see that 
 $$\beta(S/I) = 1 + \beta_1(S/I) + \sum_{i \geq 2} \beta_i(S/I) \geq 1 + (2c -1) + 2( 2^c - 1 - c) = 2^{c+1} -2.\qedhere$$
\end{proof}

\begin{Example} \label{ex is sharp}The inequalities above are sharp.  Let $I = (xa,xb,xc, ad,be)$.  Then $\Ht I = 3$ and $\{\beta_i(S/I)\} = \{1,5,6,2\}$, so $\beta(S/I) =14 = 2^4 -2.$   

More generally, the family of ideals 
$$x(a_1, \ldots, a_c) + (a_2b_2, \ldots, a_cb_c)$$
has sum of betti numbers equal to $2^{c+1} -2$ as can be checked using the decomposition above.  Evidently the bounds for the individual betti numbers must be equalities as well. 
\end{Example}

\begin{Remark}We remark that if equality holds when $c = 2$ then it is clear that the betti numbers $\beta_i(S/I)$ are $\{1,3,2\}$.
\end{Remark}

The last remaining case we have is:

\begin{Proposition}\label{simple case linear resolution}
Suppose that $I$ is an NCI of the form:
$$I = x(a_1, \ldots, a_n) + J + (h_1k_1, \ldots, h_{c-n}k_{c-n})$$
where $2 \leq n < c$ and $J$ is generated in degree $2$.   Then $\beta(S/I) \geq 2^{c} + 2^{c-1}.$  The inequality is strict unless $n = 2$ and $c=3$. 

If $n = c-1$ then 
$$ \beta_i(S/I) \geq {c \choose i} + {c-1 \choose i} \ \mbox{ if  $1 \leq i \leq c.$}$$

If $n < c-1$ then 
$$ \beta_i(S/I) \geq {c \choose i} + {c-1 \choose i-1} \ \mbox{  if $0 \leq i \leq c.$}$$
\end{Proposition}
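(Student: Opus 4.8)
The plan is to set up a betti splitting on the variable $x$, just as in the proof of Proposition~\ref{Height c Case}, and reduce to understanding the three pieces $x(a_1,\dots,a_n)$, $J' := J + (h_1k_1,\dots,h_{c-n}k_{c-n})$, and the intersection $x(a_1,\dots,a_n)\cap J'$. Writing $I = x(a_1,\dots,a_n) + J'$ where no generator of $J'$ is divisible by $x$, and noting that $(a_1,\dots,a_n)$ has a linear resolution (it is generated by variables, so its resolution is the Koszul complex), Proposition~\ref{betti splitting} gives $\beta_i(I) = \beta_i((a_1,\dots,a_n)) + \beta_i(J') + \beta_{i-1}((a_1,\dots,a_n)\cap J')$. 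The first term contributes $\binom{n}{i}$. So the bulk of the work is to (i) pin down the intersection module and (ii) bound $\beta(S/J')$ from below using the fact that $\Ht J' \geq c-1$ and that $J'$ decomposes further as $J$ plus a regular sequence $(h_1k_1,\dots,h_{c-n}k_{c-n})$ on disjoint variables from $J$, so $S/J' \cong S/J \otimes S/(h_1k_1,\dots,h_{c-n}k_{c-n})$ and $\beta(S/J') = \beta(S/J)\cdot 2^{c-n}$.

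First I would analyze the intersection $(a_1,\dots,a_n)\cap J'$. Because the $h_ik_i$ involve variables disjoint from everything in $(a_1,\dots,a_n)$ and from $J\subset(a_1,\dots,a_n)$, one expects $(a_1,\dots,a_n)\cap J' = \big((a_1,\dots,a_n)\cap J\big) + \big((a_1,\dots,a_n)\cdot(h_1k_1,\dots,h_{c-n}k_{c-n})\big)$; and since $J \subset (a_1,\dots,a_n)$ we have $(a_1,\dots,a_n)\cap J = J$. So the intersection is $J + (a_1,\dots,a_n)(h_1k_1,\dots,h_{c-n}k_{c-n})$, and modding out by $J$ this should simplify — in fact I expect $S/\big((a_1,\dots,a_n)\cap J'\big)$ to again be a tensor product of $S/(J + \text{something})$ with a Koszul-type factor, so that its total betti number is again a power of $2$ times $\beta$ of a smaller object. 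The cleanest route is probably to observe that $x(a_1,\dots,a_n)\cap J' = xJ + x(a_1,\dots,a_n)\cap(h_1k_1,\dots)$, mirroring the computation $x(a_1,\dots,a_c)\cap J = xJ$ in Proposition~\ref{Height c Case}, and then handle the regular-sequence part by the Künneth/tensor argument.

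Next I would split on the two ranges $n = c-1$ versus $n < c-1$. When $n = c-1$ there are no $h_ik_i$ (since $c-n = 1$ forces exactly one, or none — I need to check the bookkeeping against Notation~$(\star)$, where $K$ had height $c-n$), and the bound $\beta_i \geq \binom{c}{i} + \binom{c-1}{i}$ should come out of combining $\binom{n}{i} = \binom{c-1}{i}$ from the Koszul part with $\beta_i(S/J)\geq\binom{c}{i}$-type estimates via Proposition~\ref{Proposition Horrocks choose} applied to $S/J$ (whose height is exactly $c-1$, wait — I must be careful: I want $\binom{c}{i}$, so the relevant height must be arranged to be $c$, presumably because $J$ together with one of the $a_i$'s, or the intersection term, has the right height; this is exactly the kind of height bookkeeping that powered Corollary~\ref{presentation split betti number}). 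When $n < c-1$ the extra regular sequence $(h_1k_1,\dots,h_{c-n}k_{c-n})$ is genuinely present and tensoring multiplies betti numbers by $2^{c-n}$; the target $\binom{c}{i} + \binom{c-1}{i-1}$ is then the generating-function identity $\binom{c}{i}+\binom{c-1}{i-1} = \big[(1+3t+2t^2)(1+t)^{c-3} \text{ type expansion}\big]$, i.e. I'd verify it by the Vandermonde/Pascal identity $\binom{c}{i} = \binom{c-1}{i} + \binom{c-1}{i-1}$ after factoring out $(1+t)^{c-n}$ and reducing to the base case $n=2$ handled by Proposition~\ref{Height c Case}-style splitting (where $J$ is a height-$1$ ideal of quadrics in the $a_i$'s, whose betti numbers I can bound directly). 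Summing the individual bounds gives $\beta(S/I) \geq 2^c + 2^{c-1}$ in both ranges, and equality in the generating-function identity forces $(1+t)^{c-3}$ times $(1+5t+5t^2+t^3)$, i.e. $n=2$, $c=3$.

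The main obstacle I anticipate is the lower bound on $\beta(S/J)$ — or more precisely on the betti numbers of the intersection module $S/\big(x(a_1,\dots,a_n)\cap J'\big)$ — with the correct height. The ideal $J$ is an arbitrary squarefree quadratic ideal contained in $(a_1,\dots,a_n)$ with $\Ht J \geq n-1$, and while $J$ being a cone-free quadratic ideal of this height gives $\beta_i(S/J) \geq \binom{n-1}{i}$ by Horrocks, getting the sharper $\binom{c}{i}$-flavored bound that survives summation seems to require exploiting the NCI hypothesis on $J$ itself (Lemma~\ref{Lemma NCI}: pairwise gcds have degree $\leq 1$) together with the interaction with the $h_ik_i$ factor. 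The delicate point is that $J$ might itself be a complete intersection, in which case $S/J$ contributes only $2^{n-1}$ and the gain must come entirely from the intersection term and the Koszul part — verifying that the arithmetic still closes, and that the strict inequality holds whenever $(n,c)\neq(2,3)$, is where the real care is needed.
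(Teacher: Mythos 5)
Your proposal correctly identifies the betti-splitting entry point from Proposition~\ref{betti splitting}, and the height bookkeeping for the intersection term is in the right spirit, but the core of the paper's proof is missing and a couple of your setup statements are off. First, a small but telling error: when $n = c-1$, the ideal $K = (h_1k_1,\ldots,h_{c-n}k_{c-n})$ has exactly \emph{one} generator $h_1k_1$ (since $c-n=1$), not ``no $h_ik_i$.'' That generator is essential, because the crux of the $n=c-1$ case is \emph{not} pure arithmetic: the splitting only gives $\beta(S/I) \geq 2^c + 2^{c-1} - 2$, which falls short by $2$. The paper then uses the NCI hypothesis directly --- examining $I(h_1 = 1)$, which contains $xa_1, xa_2$ and must be a CI, forces $a_1h_1$ or $a_2h_1 \in I$, hence $J+K$ contains both that quadric and $h_1k_1$ and is \emph{not} a CI; this raises $\beta_1(S/(J+K))$ by $1$, and combined with the parity of $\beta(S/I)$ (the alternating sum vanishes) the bound closes. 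A further argument is needed for strictness when $c\geq 4$. None of this appears in your proposal, and your tensor rewrite $\beta(S/J') = \beta(S/J)\cdot 2^{c-n}$ does not by itself supply the missing $2$ when $J+K$ happens to be a CI --- exactly the obstacle you flag in your final paragraph without resolving.

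Second, and more significantly, your plan to ``reduce to the base case $n=2$ handled by Proposition~\ref{Height c Case}-style splitting'' is not the paper's route for $n < c-1$ and I don't see how to make it work: the intersection module $(a_1,\ldots,a_n)\cap(J+K)$ has height only $\min(n, c-1) = n$, so the splitting alone gives $2^n + 2^{c-1} + 2^n - 2$, which is strictly below $2^c + 2^{c-1}$ once $n \leq c-2$. The paper instead abandons the betti-splitting technique entirely here and switches to the pruning method (Theorem~\ref{presentation matrix}, Corollary~\ref{presentation split betti number}). Getting there requires a genuinely combinatorial chain of deductions using the NCI hypothesis: choose $x$ to divide the \emph{maximum} number of generators, then successively analyze $I(h_1=1)$, $I(h_2=1)$, and $I(a_n=1)$ --- each forced to be a CI --- to conclude that $h_1a_i, h_2a_i \in I$ for $i<n$ while $h_1a_n, h_2a_n \notin I$, and finally that $a_na_i \in I$ for all $i<n$. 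That last conclusion is precisely the hypothesis of Corollary~\ref{presentation split betti number} (with $F_n = a_n$), which then delivers $\beta_i(S/I) \geq \binom{c}{i} + \binom{c-1}{i-1}$, with strictness because $(I,x) \supset (h_1a_1, h_2a_1)$ is not a CI. Your proposal contains no analogue of this combinatorial extraction, and the generating-function/Vandermonde identities you invoke do not substitute for it. The two distinct decomposition techniques of Section~3 are both needed, one per case, and the paper's Section~3 is explicitly set up to make that point; a proof attempt that tries to run everything through the splitting will stall exactly where you anticipate it might.
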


\begin{proof} First, if $I$ has an associated prime of height greater that $c$ then by Proposition \ref{Proposition Horrocks choose}, we would have that $\beta(S/I) \geq 2^{c+1}$, which is larger than $2^c + 2^{c-1}$.   Hence we will assume that $I$ is of the form in Notation $(\star)$.

Let $K = (h_1k_1, \ldots, h_{c-n}k_{c-n})$.   Notice that $\Ht (J+K) \geq c-1$.  Then by Theorem \ref{betti splitting} we have that 
\begin{eqnarray*}
\beta_1(S/I) & = & \beta_1(S/x(a_1, \ldots, a_n)) + \beta_1(S/(J+K)) \\
 & \geq &  n + c - 1. \\ 
\beta_i(S/I) & = & \beta_i(S/x(a_1, \ldots, a_n)) + \beta_i(S/(J+K)) + \beta_{i-1}(S/(x(a_1,\ldots,a_n)\cap (J+K))) \\ 
&= & {n \choose i} + \beta_i(S/(J+K)) + \beta_{i-1}(S/(a_1, \ldots, a_n)\cap (J+ K)) \\
&\geq & {n \choose i} + {c-1 \choose i} + {m \choose i-1} \mbox{ for $i \geq 2$}.
\end{eqnarray*}
Where $m= \min(n, c-1) \leq \Ht ((a_1, \ldots, a_n) \cap (J+K))$.  Then we have that 
$$\beta(S/I) \geq 2^n + 2^{c-1} + 2^m - 2.$$
\textbf{Case 1: $n = c-1$:} The inequalities simplify to
$$ \beta_1(S/I) \geq 2c-2$$
$$\beta_i(S/I) \geq {c \choose i} + {c-1 \choose i} \ \ i \geq 2.$$
which yield $\beta(S/I) \geq 2^c + 2^{c-1} -2$.

However notice that equality occurs only if $(J+K)$ is a CI.  We will rule this out.  Indeed, consider $I(h_1 =1)$.   This has $xa_1,xa_2$ as minimal generators and thus, either $a_1h_1$ or $a_2h_1$ is in $I$. But then $(J+K)$ contains $h_1k_1$ also, so that $(J+K)$ is not a CI and $\beta_1(S/I) \geq 2c-1$.  Now $\beta(S/I)$, which is even, must be at least $2^c + 2^{c-1} -1$.  The result follows.

If $c\geq 4$, we see from examining $I(h_1=1)$, that there are $(c-2)$ generators of the form $a_ih_1$ and $(c-2)$ of the form $a_ik_1$.   Since $h_1k_1$ is also a minimal generator, 
$$\beta_1(S/(J+K)) \geq (c-2) + (c-2)  + 1 = 2c-3 \geq 2 + (c-1).$$   Thus one bound from Proposition \ref{Proposition Horrocks choose} is off by at least 2, so  $\beta(J+K) \geq (2^{c-1} -1) + 2$.   Similarly, $(a_1, \ldots, a_n)\cap (J+K)$ includes all the same generators $a_ik_1, a_ih_1$, so 
$$\beta_1(S/J)  \geq (c-2) + (c-2) = 2c - 4\geq 1 + (c-1).$$ Thus we have that $\beta(S/I) \geq 2^c + 2^{c-1} + 1$ as required. 

\noindent \textbf{Case 2: $n \leq c-2$:}  We will assume that any variable $y$ divides at most $n$ minimal generators.  In other words, we have chosen the $x$ that divides the largest number of generators.
Consider the ideal $I(h_1= 1)$, which must be a CI.   This ideal contains $xa_1, \ldots xa_n$.  At most one of these can be a minimal generator.  Thus there must be minimal generators in $J(h_1=1)$ that divide $n-1$ of these terms.  Without loss of generality, say $h_1a_1, \ldots, h_1a_{n-1} \in J$. These are minimal generators.  Thus $h_1$ divides $n$ generators and by assumption, it divides no other generators.  In particular, $h_1a_n \notin I$.

Now since $n \leq c-2$, $h_2k_2 \in I$.  Observe that $I(h_2 = 1)$ contains $xa_1, \ldots, xa_{n-1}$ and $h_1a_1, \ldots, h_1a_{n-1}$.  As $xh_2 \notin I$ and $h_1h_2\notin I$ we must have that $h_2a_1, \ldots, h_2a_{n-1} \in I$ and as before, $h_2a_n \notin I$.

Finally, consider $I(a_n = 1)$.  This ideal contains $h_1a_i$ and $h_2a_i$ for $1 \leq i \leq n-1$.   This implies that $a_na_i \in I$ for each $I$.   We are now in the case of Corollary \ref{presentation split betti number}.  Notice that $(I,x)$ is not a CI since it contains $h_1a_1$ and $h_2a_1$ so the inequality is strict.
\end{proof}

\begin{Remark}
If $I$ is NCI of height $3$ and $\beta(S/I) = 2^3 + 2^2$ then from the proofs above, $I$ has exactly $5$ quadratic generators and up to relabeling, $I$ must be of the form 
$$I = x(a_1,a_2) + (a_1h_1, a_2k_1) + (h_1k_1)$$ which is the second case in Theorem \ref{MainTheorem}. 
\end{Remark}

\begin{proof}[Proof of Theorem \ref{ThmNCI}]
If $I$ has a minimal generator of degree at least three, the result follows from Proposition \ref{The NonlinearCase}.  If $I$ is generated in degree two, then the result follows from Propositions \ref{Height c Case} and \ref{simple case linear resolution}, which include the cases where equality holds.
\end{proof}


\section{The individual betti numbers}\label{section individual}
In \cite{C} it was shown that if $M$ is a multi-graded module of finite length over $S = k[x_1, \ldots, x_c]$ and $M$ is not isomorphic to $S$ modulo a regular sequence then either 
$\beta_i(M) \geq {c \choose i} + {c-1 \choose i-1}$ for all $i$  or $\beta_i(M) \geq {c \choose i} + {c-1 \choose i}$ for all $i.$  This means that, for instance, the first or last betti number must be at least $2$.  Such bounds will not hold without the finite length condition, even in the multi-graded case.

The results in this paper can be assembled to give general bounds for the numbers $\beta_i(S/I)$ when $I$ is a monomial ideal.  Suppose that $I$ is a squarefree monomial ideal of height $c$.  Then by Algorithm \ref{Algorithm} we have that
$$\beta_i(S/I) \geq \beta_i(S/(J,u_1,\ldots, u_{c-d})$$
where $J$ is NCI of height $d$.  Then  by Propositions \ref{The NonlinearCase}, \ref{Height c Case}, and \ref{simple case linear resolution} for $i \geq 1$ we have that
\begin{eqnarray}
\label{eqn1}\beta_i(S/J) &\geq & 2{d \choose i} \ \ \mbox{ for all $i\geq 2$ and $\beta_1(S/J) \geq 2d-1$ or } \\ 
\label{eqn2}\beta_i(S/J) &\geq & {d \choose i} + {d-1 \choose i-1} \ \ \mbox{ for all $i\geq 0$  or } \\
\label{eqn3}\beta_i(S/J) &\geq & {d \choose i} + { d-1 \choose i} \ \ \mbox{ for all $i\geq 1$ }
\end{eqnarray}

Then notice that the betti numbers of $S/I$ can be obtained from those of $S/J$ by tensoring with the appropriate Koszul complex on the $u_i$.   In terms of generating series: 
\begin{equation}\label{eqn4}\sum\beta_i(S/I)t^i = \left(\sum \beta_i(S/J)t^j\right)(1 + t)^{c-d}.\end{equation}
Unfortunately, because in (\ref{eqn1}) the and (\ref{eqn3}), the formula is different for $i = 0,1$, and $i = 0$ respectively, it doesn't follow that similar bounds exist for $S/I$, say with $d$ replaced by $c$, as seen in the following Example. 

\begin{Example}
Given that $(\ref{eqn1})$ is considerably larger than the other two bounds, it is reasonable to ask that if $I$ is an ideal of height $d$ whether or not at least one of (\ref{eqn2}) or (\ref{eqn3}) holds.  If $d = 4$, then this would say that the betti sequence of $S/I$ is at least as big as $\{1, 5, 9, 7, 2\}$ or $\{1, 7, 9, 5, 1\}$.  However, if $I = (xy,yz,zv,vw,wx,u)$ then the betti numbers are $\{1,6,10,6,1\}$ which violate both bounds.    Hence the bounds determined by (\ref{eqn4}) are perhaps the best we can hope for.

\medskip 
\noindent Example \ref{ex is sharp} and Remark \ref{prune is sharp} show that (\ref{eqn1}) and (\ref{eqn2}) are sharp.  

\medskip
\noindent Finally, notice that equality in (\ref{eqn3}) is impossible, as the sum of the numbers (with $\beta_0 = 1$) on the right hand side is $2^c + 2^{c-1} -1$.  Thus at least one of the betti numbers is at least one larger.  If $I = (xy,yz,zv,vw,wx)$ then the betti numbers are $\{1,5,5,1\}$ which are as close to the bound $\{1,5,4,1\}$ as possible.
\end{Example}

\section*{Acknowledgments}
\noindent The first author was partly supported by the NSF RTG grant DMS \#1246989 and the second author was supported by the University of Utah.  The authors thank the University of Utah and its summer REU program which was the start of this project.  The authors thank Srikanth Iyengar, Jake Levinson, and Jonathan Monta\~no for helpful conversations and suggestions on an earlier draft of this paper. 
\bibliographystyle{plain}
\bibliography{References}
\end{document}